\documentclass[12pt]{amsart}

\usepackage{hyperref}

\usepackage{enumerate}
 
\usepackage{graphicx}
\usepackage{xcolor}
 
\usepackage{amssymb}

\newtheorem{teo}{Theorem}[section]
\newtheorem{exer}[teo]{\sffamily\bfseries Ejercicio}
\newtheorem{lem}[teo]{Lemma}
\newtheorem{cor}[teo]{Corollary}
\newtheorem{prop}[teo]{Proposition}
 
\newtheorem{defi}[teo]{Definition}
\newtheorem{rem}[teo]{Remark}

\numberwithin{equation}{section}


\frenchspacing

\textwidth=13.5cm
\textheight=23cm
\parindent=16pt
\oddsidemargin=-0.5cm
\evensidemargin=-0.5cm
\topmargin=-0.5cm

%
\newcommand{\dist}{\operatorname{dist}}
\newcommand{\diag}{\operatorname{Diag}}
\newcommand{\kah}{\mathcal{K}(\mathcal{H})^{ah}}
\newcommand{\dbh}{\mathcal{D}\left(\mathcal{B}\left(\mathcal{H}\right)\right)}
\newcommand{\dkh}{\mathcal{D}\left(\mathcal{K}\left(\mathcal{H}\right)\right)}
\newcommand{\dah}{\mathcal{D}\left(\mathcal{B}\left(\mathcal{H}\right)\right)^{ah}}
\newcommand{\kh}{\mathcal{K}(\mathcal{H})}
\newcommand{\uh}{\mathcal{U}(\mathcal{H})}

\newcommand{\bh}{\mathcal{B}(\mathcal{H})^{h}}
\newcommand{\bah}{\mathcal{B}(\mathcal{H})^{ah}}
\newcommand{\ob}{\mathcal{O}_b}
\newcommand{\oa}{\mathcal{O}_A}
\newcommand{\oo}{\mathcal{O}}
\newcommand{\uu}{\mathcal{U}}
\newcommand{\ukd}{{\mathcal {U}}_{k,d}}
\newcommand{\uda}{\mathcal{U}_{k+d}}
\newcommand{\udiag}{\mathcal{U}_{d}}
\newcommand{\uc}{\mathcal{U}_k}
\newcommand{\aaa}{\mathcal{A}}
\newcommand{\bch}{B-C-H}
\newcommand{\bb}{\mathcal{B}}
\newcommand{\kk}{\mathcal{K}}
\newcommand{\J}{\mathcal{J}}
\newcommand{\PP}{\mathcal{P}}

\newcommand{\h}{\mathcal{H}}

\newcommand{\N}{\mathbb N}
\newcommand{\C}{\mathbb C}

\newcommand{\R}{\mathbb R}
\newcommand{\F}{\mathcal{F}}

\newcommand{\I}{\mathcal I}

\newcommand{\longi}{{\rm L}}
\newcommand{\bit}{\begin{itemize}}
\newcommand{\eit}{\end{itemize}}
\newcommand{\be}{\begin{enumerate}}
\newcommand{\ee}{\end{enumerate}}
\newcommand{\bx}[1]{\begin{exer}\rm{#1}}
\newcommand{\ex}{\end{exer}}

\newcommand{\ba}{\begin{array}}
\newcommand{\ea}{\end{array}}
\newcommand{\bc}{\begin{center}}
\newcommand{\ec}{\end{center}}

\newcommand{\s}{\beta}

\newcommand{\D}{\mathcal{D}}

\newcommand{\bq}{\begin{equation}}
\newcommand{\eq}{\end{equation}}

\begin{document}
 
\baselineskip=17pt

\title[Unitary subgroups and compact self-adjoint operators]{Unitary subgroups and orbits of compact self-adjoint operators}

\author[T. Bottazzi]{Tamara Bottazzi}
\address{Instituto Argentino
	de Matem\'atica ``Alberto P. Calder\'on''\\ Saavedra 15 3$^\text{er}$ piso\\
	(C1083ACA) Ciudad Aut\'onoma de Buenos Aires, Argentina
	}
\email{tpbottaz@ungs.edu.ar}

\author[A. Varela]{Alejandro Varela}
\address{Instituto de Ciencias, Universidad Nacional de General Sarmiento\\
	J.M. Gutierrez 1150\\ 
	(B1613GSX) Los Polvorines, Pcia. de Buenos Aires, Argentina
	\\and\\ Instituto Argentino
	de Matem\'atica ``Alberto P. Calder\'on'', Saavedra 15 3$^\text{er}$ piso,
	(C1083ACA) Ciudad Aut\'onoma de Buenos Aires, Argentina} 

\email{avarela@ungs.edu.ar}

\subjclass[2010]{MSC: Primary: 22F30, 22E10, 51F25, 53C22. Secondary: 47B10, 47B15.}
\keywords{
	unitary groups, Lie subgroups, unitary orbits, geodesic curves, minimal operators in quotient spaces.}

\date{}

\begin{abstract} 
Let $\h$ be a separable Hilbert space, and $\D(\bah)$ the anti-Hermitian bounded diagonals in some fixed orthonormal basis and $\kh$ the compact operators.  We study the group of unitary operators
$$
\ukd=\{u\in \uh: 
\exists\ D\in\dah \text{ such that } u-e^D \in\kh\}
$$ 
in order to obtain a concrete description of short curves in unitary Fredholm orbits 
$\ob=\{ e^K b e^{-K}:K\in\kah\}$ of a compact self-adjoint operator $b$ with spectral multiplicity one. We consider the rectifiable distance on $\ob$ defined as the infimum of curve lengths measured with the Finsler metric defined by means of the quotient space $\kah/\mathcal{D}(\kah)$. Then for every $c\in\ob$ and $x\in T(\ob)_c
$ there exist 
 a minimal lifting $Z_0\in \bah$ (in the quotient norm, not necessarily compact) such that $\gamma(t)=e^{t Z_0}\, c\, e^{-t Z_0}$ is a short curve on $\ob$ in a certain interval.
\end{abstract} 

\maketitle 

\section{Introduction}
Let $\bb(\h)$ be the algebra of bounded operators on a separable Hilbert space $\h$, $\kk(\h)$ and $\uh$ the compact and unitary operators respectively. If an orthonormal basis is fixed we can consider matricial representations of each $A\in\bb(\h)$ and diagonal operators which we denote with $\dbh$.

Consider the following subset of the unitary group $\uh$ of $\bb(\h)$:
\begin{equation}\label{def: definicion del subgrupo Ukd}
\ukd=\{u\in\uh: \exists \  D\in\dah \text{ such that } u-e^D\in\kh  \}. 
\end{equation}
In the present work we prove that $\ukd$ is a subgroup of $\uh$. Moreover, $\ukd$ is closed, pathwise connected and shares the topology of $\uh$ given by the operator norm. Therefore $\ukd$ is a Lie subgroup in the sense of \cite{neeb} and \cite{neeb_pianzola}.

We did not find any reference to the subgroup $\ukd$ mentioned in the literature and so we included here a detailed study of it. In Theorem \ref{teo: ukd es subgrupo de Lie} we prove that $\ukd$ is a Lie subgroup of $\uh$ according to the definition mentioned before.
The Lie algebra of $\ukd$ turns to be $\kah + \dah$ which is not complemented in $\bah$ and therefore a stronger notion of Lie subgroup cannot be used (see Proposition \ref{prop: el algebra de Lie de ukd es kah+dah}).

This subgroup admits a generalization to $\mathcal{U}_{\mathcal{J},\mathcal{A}}$ for certain ideals $\mathcal{J}$ and subalgebras $\mathcal{A}$ of $\bh$ (see \ref{rem: generalizacion del subgrupo ukd}).

Our particular interest in $\ukd$ relies on the geometric study of the orbits
$$
\mathcal{O}_b^{\mathcal{V}}=\{u b u^* : u\in \mathcal{V} \}
$$ 
where $b$ a self-adjoint operator. 
If the spectrum of $b$ is finite $\ob$ is a complemented submanifold of $b+\kh$ (see \cite{andruchow_larotonda_The_rectifiable_distance}). If we consider a compact diagonal self-adjoint operator $b$ with spectral multiplicity one then the orbit $\ob$ can have a smooth structure (see Lemma 1 in \cite{bottazzi_varela_DGA}).

The subgroup $\ukd$ has the following properties.
\begin{itemize}
	\item 
		If  $\uc=\left\{u\in\uh: \exists\  D\in\mathcal{D}(\kah) \text{ such that } u-1\in\kh \right\}$, the following orbits coincide
	\begin{equation*}
		\begin{split} 
		\ob &=\mathcal{O}_b^{\uc}=\{u b u^*  : u\in\uc \}\\
			&=	\mathcal{O}_b^{\ukd}=\{u b u^*  :  u\in\ukd\}
	  \end{split}				
	  \end{equation*}

	\item The natural Finsler metric defined in $T(\mathcal{O}_b^{\ukd})_1$ and $T(\mathcal{O}_b^{\uc})_1$ by means of the quotient norm coincides if $b$ is a compact self-adjoint diagonal operator and we consider the identifications of the tangent spaces with the quotients
	\begin{equation*}
	T(\mathcal{O}_b^{\uc})_1\cong(T\ob)_c\cong (T\uc)_1/(T\I_b)_1 = \kah/\D(\kah)
	\end{equation*}
	\begin{equation*}
	\begin{split}
	T(\mathcal{O}_b^{\ukd})_1&\cong(T\ukd)_1/(T\I_b)_1 \cong \left( \kah+\dah\right)/\dah\\ &\cong\kah/\dah
	\end{split}
	\end{equation*}
	(see Remark \ref{rem: ukd y uk misma metrica de Finsler en Ob} for details).

\end{itemize}
%

These properties allow 
the construction of minimum length curves of $\ob$ considering the rectifiable distance defined in the Preliminaries (see \eqref{def: distancia rectificable}). 

%

Next we describe minimal vectors of the tangent space and their relation with the short curves in these homogeneous spaces.
 We say that a self-adjoint operator $Z\in\bb(\h)$ {\sl is minimal for} a subalgebra $\mathcal{A}\subset\bb(\h)$ if
\begin{equation}\label{def: operador minimal autoajunto}
\|Z\|=\inf_{D\in\mathcal{A}}\|Z+D\| ,
\end{equation}
for $\|\cdot \|$ the usual operator norm in $\bb(\h)$.
Given a fixed $Z$ we say that $D_0\in\mathcal{A}$ {\sl is minimal for} $Z$ if $\|Z+D_0\|=\inf_{D\in\mathcal{A}}\|Z+D\|$, that is, if $Z+D_0$ is minimal for $\mathcal{A}$. These minimal operators $Z$ allow the concrete description of short curves $\gamma(t)=e^{itZ}A e^{-itZ}$ in the unitary orbit $\oa$ of a some fixed self-adjoint operator $A\in\bh$, when considered with a certain natural Finsler metric (see \eqref{eq: def metrica de Finsler}, \cite{dmr1}, \cite{ andruchow_larotonda_The_rectifiable_distance}  and \cite{bottazzi_varela_DGA} for details and different examples).

If we fix an orthonormal basis in $\h$ we can consider matricial representations and diagonal operators in $\bb(\h)$. In \cite{bottazzi_varela_DGA} we studied the orbit $\oa$ of a diagonal compact self-adjoint  operator $b\in\bb(\h)$ under the action of the Fredholm unitary subgroup $\uc=\{e^K:K\in \kah\}$ where $\kah$ denotes the compact anti-Hermitian operators. 
We used a particular element $Z_r\in\kah$ with the property that there does not exist a compact diagonal $D_0$ such that the quotient norm 
$\|Z_r+D_0\|=\inf_{D\in\dkh}\|Z_r+D\|$ is attained. This example posted an interesting geometric question, since the existence of such minimal compact diagonal $D_0$ would allow the explicit description of a short path with initial velocity $[Z_r,b]$ (see \cite{dmr1,bottazzi_varela_LAA}).

Using that $\lim \left(Z_r\right)_{jj}$ converges to a non-zero constant when $j\to\infty$ we showed in \cite{bottazzi_varela_DGA} that the curve parametrized by 
$$
\beta(t)=e^{tZ_r}be^{-tZ_r}
$$ 
with $|t|\leq\frac{\pi}{2\|Z_r\|}$, is still a geodesic even though $Z_r$ is not a minimal operator. Moreover, $\beta$ can be approximated uniformly by minimal length curves of finite matrices $\beta_n$ (with minimal initial velocity vectors) satisfying $\beta_n(0)=\beta(0)=b$ and $\beta_n'(0)=\beta'(0)$.

Nevertheless, in the same paper, we showed examples of compact operators $Z_o$ whose unique minimal diagonals had several limits. In these cases the techniques used with $Z_r$ were not enough to prove either that $\gamma(t)=e^{tZ_o}be^{-tZ_o}$ was a short curve nor that $\gamma$ could be approximated by curves of matrices.

In the present work we describe short curves that include those cases. In order to do so we consider the unitary subgroup $\ukd$.
The action of this group on a diagonal  self-adjoint operator $b$ produces the same orbit as $\uc$ but permits a concrete description of geodesics using minimal operators of its Lie algebra $\kah+\dah$ (see \ref{teo: exp(z0)b exp(z0) es corta en Ob} and \ref{cor: curvas minimales con comienzo en c}).

%

\section{Preliminaries}
Let $(\h,\left\langle ,\right\rangle)$ be a separable Hilbert space.
As usual, $\bb(\h)$, $\uu(\h)$ and $\kk(\h)$ denote the sets of bounded, unitary and  compact operators on $\h$. We denote with $\left\|\cdot\right\|$ the usual operator norm in $\bb(\h)$. It should be clear from the context the use of the same notation $\left\|\cdot\right\|$ to refer to the operator norm or the norm on the Hilbert space $\|h\|=\langle h, h\rangle^{1/2}$ for $h\in\h$.

Given $\aaa\subset\bb(\h)$, we use the superscript $^{ah}$  (respectively $^{h}$) to note the subset of anti-Hermitian (respectively Hermitian) elements of $\aaa$.

Consider the Fredholm subgroup of $\uu(\h)$ defined as
$$\uc=\{u\in \mathcal{U}(\h): u-I\in \kk(\h)\}=\{u\in\uh: \exists\ K\in\kah, u=e^K\}$$
(see \cite{andruchow_larotonda_The_rectifiable_distance} and Proposition \ref{prop: uc es ek}).

$\uu(\h)$ is a Lie-Banach group and its Lie algebra $T_1\left(\uu(\h)\right)=\bah$. We consider the usual analytical exponential map $\exp:\bah\to\uu(\h)$, given for any $X\in\bah$ by $\exp(X)=\sum_{n=0}^{\infty}\frac{1}{n!}X^n=e^X$.
Then $\bah$ can be made into a contractive Lie algebra (i.e., $\left\|[X,Y]\right\|_c\leq \left\|X\right\|_c\left\|Y\right\|_c$ for all $X,Y\in \bah$) defining
$\left\|\cdot\right\|_c:=2\left\|\cdot\right\|$. 
Then, by Proposition 1.29 in \cite{beltita_Smooth_homogeneous_structures} 
$$\left\|\log\left(e^Xe^Y\right)\right\|_c\leq -\log\left(2-e^{\left\|X\right\|_c+\left\|Y\right\|_c}\right)$$
if $\left\|X\right\|_c+\left\|Y\right\|_c< {\log 2}$. Consequently, it can be proved that if
\begin{equation}\label{cerca de 0}
\left\|X\right\|+\left\|Y\right\|< \frac{\log 2}2
\end{equation}
the Baker-Campbell-Hausdorff (\bch) series expansion converges absolutely for all $X,Y\in \bah$. This \bch\ series can be defined as
$$\log(e^Xe^Y)=\sum_{n=1}^{\infty}c_n(T),$$
where each $c_n$ is a polynomial map of $\bah\times\bah$ into $\bah$ of degree $n$, $\forall\ n\in \N$. For instance, the first terms are:
$$\left\{\begin{array}{l}
c_1(X,Y)=X+Y,\\ 
c_2(X,Y)=\frac{1}{2}[X,Y],\\
c_3(X,Y)=\frac{1}{12}[X,[X,Y]]+\frac{1}{12}[Y,[Y,X]].
\end{array}\right.$$
Also, each $c_n$ is a sum of commutators for all $n>1$. Therefore, the formula of the series can be rewritten as follows
\begin{equation}\label{bch}
\log(e^Xe^Y)=X+Y+\sum_{n=2}^{\infty}c_n(T).
\end{equation}
To see the complete general expression or other properties of the \bch\ series for Lie algebras see \cite{beltita_Smooth_homogeneous_structures} or \cite{varadarajan_Lie_groups}.

\begin{defi}\label{def: suficientemente cerca de cero}
	Given $X\in \bah$ we will say that $X$ is sufficiently close to $0$ if $\left\|X\right\|<\frac{\log 2}4$.
\end{defi}
Using the previous definition the \bch\ series \eqref{bch} converges for every $X,Y\in \bah$ sufficiently close to $0$, since this condition implies \eqref{cerca de 0}.

We define the unitary Fredholm orbit of a fixed self-adjoint $A\in \bb(\h)$ as
\begin{equation}
\mathcal{O}_A=\{uAu^*:u\in\mathcal{U}_k(\h)\}\ \subset\ A+\kk(\h).\label{orbita}
\end{equation}
Considering the action $\pi_b:\uu_k\to\oa$, $\pi_b(u)=L_{u}\cdot b = ubu^*$ then $\oa$ becomes a homogeneous space in some cases. 
If $A$ has finite spectrum then $\oa$ is a submanifold of $A+\kh$ (see Theorem 4.4 in \cite{andruchow_larotonda_The_rectifiable_distance}) and if $A$ is a compact operator with spectral multiplicity one then $\oa$ has a smooth structure (see Lemma 1 in \cite{bottazzi_varela_DGA}).

Denote by $[\cdot ,\cdot ]$ the commutator operator in $\bb(\h)$, that is, for any $T,S\in \bb(\h)$, $[T,S]=TS-ST.$

For each $b\in \oa$, the isotropy group $\I_b$ is
\begin{equation*}
\begin{split}
\I_b&=\{u\in \uc:\ ubu^*=b\}=\{e^K\in \uc:\ K\in \kah,\ [K,b]=0 \}
\\
&=\{b\}'\cap \kah,
\end{split}
\end{equation*}
where $\{b\}'$ is the set of all operators in $\bb(\h)$ that commute with $b$ (i.e., $[T,b]=0$).

For each $b\in \oa$, its tangent space is
$(T\oa)_b=\{Yb-bY: Y\in\kah \}$
and can be identified as follows
$$
(T\oa)_b\cong(T\uc)_1/(T\I_b)_1\cong\kah/\left(\{b\}'\cap\kh^{ah}\right).
$$
In this context we consider the following Finsler metric defined for $x\in (T\oa)_b$ as
\begin{equation}\label{eq: def metrica de Finsler}
\begin{split}
\|x\|_b&=\inf\left\{\|Y\|: Y\in\kah \hbox{ such that } [Y,b]=x\right\}\\
&=\inf_{
	C\in \left(\{b\}'\cap\kh^{ah}\right)}
\ 
\left\|Y_0+C\right\|.
\end{split}
\end{equation}

where $Y_0+C$ is any element of the class $[Y_0]= \{Y\in\kah:  [Y,b]=x\}$.
Note that this norm is invariant under the action of $\uc$. 

An element $Z\in \bb(\h)^{ah}$ such that $[Z,b]=x$ and $\left\|Z\right\|=\left\|x\right\|_b$ is called a minimal lifting for $x$. This operator $Z$ may not be compact and/or unique (see \cite{bottazzi_varela_LAA}).
Consider piecewise smooth curves $\beta:[a,b]\to \oa$. We define 
\begin{equation}\label{def: longitud rectificable}
\longi(\beta)=\int_a^b\left\|\beta'(t)\right\|_{\beta(t)}\ dt\ 
\text{ , and }
\end{equation}
\begin{equation}\label{def: distancia rectificable}
\dist(c_1,c_2)=\inf\left\{\longi(\beta):\beta\ {\rm is\ smooth},\beta(a)=c_1,\beta(b)=c_2\right\} 
\end{equation}
as the rectifiable length of $\beta$ and distance between two points $c_1, c_2\in\oa$, respectively.
In this context, we will call short curves those piecewise smooth curves $\gamma:[r,s]\to \mathcal{O}_A$ such that  $\longi(\gamma|_{[t_1,t_2]}) \leq \dist (\gamma(t_1), \gamma(t_2))$ for every subinterval $[t_1,t_2]\subset [r,s]$.

If $\aaa$ is any $C^*$-subalgebra of $\bb(\h)$ and $\left\{e_k\right\}_{k=1}^{\infty}$ is a fixed orthonormal basis of $\h$, we denote with $\D(\aaa)$ the set of diagonal operators with respect to this basis, that is
$$\D(\aaa)=\left\{T\in \aaa:\ \left\langle Te_i,e_j\right\rangle=0\ ,\ \text{ for all } i\neq j\right\}.$$
Given an operator $Z\in \aaa$, if there exists an operator $D_1\in \D(\aaa)$ such that
$$\left\|Z+D_1\right\|\leq \left\|Z+D\right\|$$
for all $D\in \D(\aaa)$, we say that $D_1$ is a best approximant of $Z$ in $\D(\aaa)$. The operator $Z+D_1$ satisfies 
$$\left\|Z+D_1\right\|={\rm dist}\left(Z,\D\left(\aaa\right)\right),$$
and $Z+D_1$ is a minimal operator in the class $[Z]$ of the quotient space $\aaa/\D(\aaa)$, or similarly we say that $D_1$ is minimal for $Z$. 

These minimal operators play an important role in the concrete description of minimal length curves on $\oa$ (see \cite{dmr1} and \cite{andruchow_larotonda_The_rectifiable_distance}).

If $Z$ is anti-Hermitian it holds that 
$${\rm dist}\left(Z,\D\left(\aaa\right)\right)={\rm dist}\left(Z,\D\left(\aaa^{ah}\right)\right),
$$
since $\left\|Im(X)\right\|\leq \left\|X\right\|$ for every $X\in \aaa$.

Let $T\in \bb(\h)$ and consider for the fixed basis of $\h$ the coefficients $T_{ij}=\left\langle Te_i,e_j\right\rangle$ for each $i,j\in \N$. This defines an infinite matrix $\left(T_{ij}\right)_{i,j\in \N}$ such that their $j$th-column and $i$th-row of $T$ are the vectors in $\ell^2$ given by $c_j(T)=\left(T_{1j},T_{2j},...\right)$ and $f_j(T)=\left(T_{i1},T_{i2},...\right)$, respectively.

We use $\sigma(T)$ and $R(T)$ to denote the spectrum and range of $T\in \bh$, respectively. 

We define $\Phi:\bb(\h)\to \D(\bb(\h))$, $\Phi(X)= {\diag}(X)$, as the map that builds a diagonal operator with the same diagonal as $X$ (i.e., $\Phi(X)_{ii}=\diag(X)_{ii}=X_{ii}$ and $0$ elsewhere). For a given bounded sequence $\{d_n\}_{n\in\N}\subset \C$ we denote with ${\diag}\big( \{d_n\}_{n\in\N}\big)$ the diagonal (infinite) matrix with $\{d_n\}_{n\in\N}$ in its diagonal  and $0$ elsewhere.

\section{The unitary subgroup {$\ukd$}  }
Recall the unitary Fredholm group 
$$
\uc=\{u\in\uu(\h): u-1\in\kk(\h)\}
$$

(see \cite{andruchow_larotonda_The_rectifiable_distance} and \cite{bottazzi_varela_DGA}) and define the following subsets of $\uu(\h)$:
\begin{equation}\label{def: subconjuntos de operadores unitarios}
\begin{split}
\ukd &=\{u\in \uu(\h):\exists\ D\in \D(\bah)\ {\text{ such\ that }}\ u-e^D\in \kk(\h)\}, \\
\udiag &=\{u\in \uu(\h):\exists\ D\in \D(\bah)\ {\text{ such\ that }}\ u=e^D\}\\
&=\uu(\h)\cap \D(\bb(\h))\\
\uda &=\left\{u\in \uu(\h):\exists\ K\in \kah\ {\text{ and }}\ D\in \D(\bah)\ \right.
\\ &\left.\hskip6cm {\text{ such\ that }} u=e^{K+D}\right\}.
\end{split}
\end{equation}

Also denote with
$$\oo^{\F}_b=\{ubu^*:\ u\in \F\},$$
where $\F$ is any of the sets of unitary operators defined in \eqref{def: subconjuntos de operadores unitarios}. The main purpose of this section is the study of these unitary sets and its relations.

The following Proposition has been proved in \cite{bottazzi_varela_DGA} using arguments of Lemma 2.1 in \cite{andruchow_larotonda_The_rectifiable_distance}.
\begin{prop} \label{prop: uc es ek}
$\uc=\{e^K: K\in \kah, \left\|K\right\|\leq \pi\}$.
\end{prop}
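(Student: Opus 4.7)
The plan is to prove the two set inclusions separately; only one direction is nontrivial. For the easy inclusion $\{e^K : K\in\kah,\ \|K\|\leq\pi\}\subseteq\uc$, given $K\in\kah$ we have $(e^K)^{*}=e^{-K}=(e^K)^{-1}$, so $e^K\in\uh$; and $e^K-I=\sum_{n=1}^{\infty}K^n/n!$ is a norm-convergent series whose partial sums lie in $\kh$ (since $\kh$ is a two-sided ideal containing $K$), so $e^K-I\in\kh$ by norm-closedness of $\kh$.

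For the harder inclusion $\uc\subseteq\{e^K : K\in\kah,\ \|K\|\leq\pi\}$, fix $u\in\uc$. Since $u-I$ is compact, Weyl's theorem on essential spectra gives $\sigma_{\mathrm{ess}}(u)=\sigma_{\mathrm{ess}}(I)=\{1\}$. As $u$ is unitary, $\sigma(u)$ lies on the unit circle, and every point of $\sigma(u)\setminus\{1\}$ is an isolated eigenvalue of finite multiplicity with the only possible accumulation point at $1$. I would enumerate these isolated eigenvalues as $\{\lambda_n\}$ with finite-rank spectral projections $P_n$, write $P_1$ for the spectral projection at $\lambda=1$ (possibly of infinite rank), and observe that $u=P_1+\sum_n\lambda_n P_n$ in the strong operator topology. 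Then I would fix the branch of logarithm $\ell(e^{i\theta})=i\theta$ for $\theta\in(-\pi,\pi]$, so that $|\ell|\leq\pi$ and $\ell$ takes purely imaginary values, and define $K:=\sum_n\ell(\lambda_n)P_n$.

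The required properties of $K$ then fall out. Anti-Hermicity follows because each $\ell(\lambda_n)\in i\R$ and each $P_n$ is self-adjoint, giving $K^{*}=-K$. The bound $\|K\|\leq\pi$ is immediate from $|\ell|\leq\pi$ together with the orthogonality of the $P_n$. For compactness, since $\lambda_n\to 1$ we have $\ell(\lambda_n)\to 0$, so the finite-rank partial sums $\sum_{n\leq N}\ell(\lambda_n)P_n$ converge in operator norm to $K$, hence $K\in\kh$. Finally the identity $e^K=u$ follows from $e^{\ell(\lambda)}=\lambda$ applied spectrally on each $P_n$, together with $e^{K}$ acting as the identity on $\mathrm{ran}(P_1)$ where $K$ vanishes by construction.

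The main obstacle is that the principal branch $\ell$ is discontinuous at $-1$, so continuous functional calculus does not directly yield a well-defined $K$. The constraint $\sigma_{\mathrm{ess}}(u)=\{1\}$ resolves this: either $-1\notin\sigma(u)$ and $\ell$ is continuous on $\sigma(u)\subseteq\{|\lambda|=1\}\setminus\{-1\}$; or $-1$ is an isolated eigenvalue of finite multiplicity with spectral projection $P_{-1}$, in which case the above construction assigns $K|_{\mathrm{ran}(P_{-1})}=i\pi\,P_{-1}$, which is well-defined (the alternative choice $-i\pi$ also works). In both cases the resulting $K$ is compact, anti-Hermitian, satisfies $\|K\|\leq\pi$ and $e^K=u$, completing the argument.
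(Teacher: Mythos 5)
Your argument is correct: the easy inclusion via the ideal property of $\kh$, and the converse via the fact that $u-1$ compact forces $\sigma(u)$ to be $\{1\}$ together with isolated unit-circle eigenvalues of finite multiplicity accumulating only at $1$, so the principal-branch logarithm applied to the spectral decomposition yields a compact anti-Hermitian $K$ with $\|K\|\leq\pi$ and $e^K=u$ (the possible eigenvalue $-1$ causing no trouble since it is isolated). The paper itself gives no proof but cites \cite{bottazzi_varela_DGA} and Lemma 2.1 of \cite{andruchow_larotonda_The_rectifiable_distance}, whose argument is essentially this same spectral-logarithm construction, so your proposal follows the same route.
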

\begin{rem} \label{obs2}
Let $S_0\in \kah$ and $D_0\in \dah$. Then, the exponential series 

$\sum_{n=0}^{\infty}\frac{1}{n!}(S_0+D_0)^{n}$ converges absolutely and
\begin{equation*}\label{eq: partiendo la serie exponencial S_0 mas D_0}
\begin{split}
\sum_{n=0}^{\infty}&\frac{1}{n!}(S_0+D_0)^{n}=\\
&=\sum_{n=0}^{\infty}\frac{1}{n!}\left(S_0^{n}+\binom{n}{1} S_0^{n-1} D_0+\dots+\binom{n}{n-1} S_0 D_0^{n-1}+D_0^n\right)
\\
&=\sum_{n=0}^{\infty}\frac{1}{n!}S_0\left(S_0^{n-1}+\binom{n}{1} S_0^{n-2} D_0+\dots+\binom{n}{n-1} D_0^{n-1}\right)+\frac1{n!}D_0^n
\\
&=S_0\underbrace{\sum_{n=1}^{\infty}\frac{1}{n!}\left(S_0^{n-1}+\binom{n}{1} S_0^{n-2} D_0+\dots+\binom{n}{n-1}  D_0^{n-1}\right)}_{\begin{array}{c}
	\scriptscriptstyle|\scriptscriptstyle|\\ \Psi(S_0,D_0)\end{array}}
+\sum_{n=0}^\infty\frac1{n!}D_0^n\\
&=S_0 \Psi(S_0,D_0)+e^{D_0}.
\end{split}
\end{equation*}
with $S_0 \Psi(S_0,D_0)\in \kh$.
\end{rem}

\begin{prop}\label{prop: Ukd es un grupo y es Uk.Ud}
$\ukd$ is a unitary subgroup of $\uh$ and it equals 

\begin{equation*}
\begin{split}
\uc \udiag =\left\{ u\in \uu(\h)\right. : &\exists\ K\in \kah, \left\|K\right\|\leq \pi,\ \text{ and }
\\
\hskip6cm & \left. D\in \D(\bah)\
\text{ such that }\ u=e^{K}e^D\right\}.
\end{split}
\end{equation*}

Moreover $$\ukd=\uc \udiag=\udiag \uc.$$
\end{prop}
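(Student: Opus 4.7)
The plan is to prove $\ukd = \uc\,\udiag$ by a direct double inclusion, derive that this set is a subgroup of $\uh$, and then obtain $\ukd = \udiag\,\uc$ by a symmetric argument.

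For $\uc\,\udiag \subseteq \ukd$: if $u = e^K e^D$ with $K \in \kah$ and $D \in \dah$, then $u - e^D = (e^K - I)e^D$; the series $e^K - I = \sum_{n\geq 1} K^n/n!$ is a norm-limit of compact operators, hence compact, and since $\kh$ is a two-sided ideal, multiplication by the bounded operator $e^D$ preserves compactness. For the reverse inclusion, given $u \in \ukd$ with $u - e^D \in \kh$, the unitary $v := u e^{-D}$ satisfies $v - I = (u - e^D)e^{-D} \in \kh$, so $v \in \uc$; Proposition \ref{prop: uc es ek} then yields $K \in \kah$ with $\|K\| \leq \pi$ and $v = e^K$, whence $u = e^K e^D \in \uc\,\udiag$.

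To see $\uc\,\udiag$ is a subgroup of $\uh$, the key observation is that $\udiag$ normalizes $\uc$: for $D \in \dah$ and $K \in \kah$,
\[ e^D e^K e^{-D} - I = e^D (e^K - I) e^{-D} \in \kh, \]
so $e^D e^K e^{-D} \in \uc$. Closure under products then follows by rewriting
\[ (e^{K_1} e^{D_1})(e^{K_2} e^{D_2}) = e^{K_1} \bigl(e^{D_1} e^{K_2} e^{-D_1}\bigr) e^{D_1+D_2}, \]
where the last factor lies in $\udiag$ because diagonal operators commute, and the product of $e^{K_1}$ with the parenthesized element of $\uc$ remains in $\uc$ (since $\uc$ is itself a group). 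Closure under inverses follows from the mirror identity $(e^K e^D)^{-1} = \bigl(e^{-D} e^{-K} e^{D}\bigr) e^{-D}$, and the identity element $1 = e^0 e^0$ is trivially present.

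The equality $\ukd = \udiag\,\uc$ is proved by repeating the double inclusion with left and right multiplication exchanged: from $u - e^D \in \kh$, one has $e^{-D} u - I = e^{-D}(u - e^D) \in \kh$, so $e^{-D} u = e^{K'}$ for some $K' \in \kah$ by Proposition \ref{prop: uc es ek}, yielding $u = e^D e^{K'} \in \udiag\,\uc$. No step presents a genuine obstacle — everything reduces to the ideal property of $\kh$, the identification $\uc = \exp(\kah)$, and the commutativity of diagonal operators.
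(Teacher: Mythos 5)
Your proof is correct, and the part establishing $\ukd=\uc\,\udiag=\udiag\,\uc$ is essentially the paper's argument: multiply by $e^{\pm D}$ on the appropriate side, use that $\kh$ is an ideal, and invoke Proposition \ref{prop: uc es ek} to write the resulting Fredholm unitary as $e^K$ with $\left\|K\right\|\leq\pi$. Where you diverge is in verifying the group axioms. The paper works directly with the definition of $\ukd$: for inverses it notes $u-e^D\in\kh$ forces $u^*-e^{-D}\in\kh$, and for products it writes $u=e^{K_1}e^{D_1}$, $v=e^{K_2}e^{D_2}$ and uses the series splitting of Remark \ref{obs2} to conclude $uv-e^{D_1+D_2}\in\kh$, i.e.\ each factor is a compact perturbation of a diagonal unitary and this property is stable under multiplication. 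You instead prove that $\uc\,\udiag$ is a group by observing that $\udiag$ normalizes $\uc$ (again via the ideal property, $e^{D}(e^{K}-1)e^{-D}\in\kh$), which gives closure under products through the rearrangement $e^{K_1}\bigl(e^{D_1}e^{K_2}e^{-D_1}\bigr)e^{D_1+D_2}$ and under inverses through $\bigl(e^{-D}e^{-K}e^{D}\bigr)e^{-D}$. Both routes rest on the same two ingredients ($\kh$ a two-sided ideal and $\uc=\exp(\kah)$), but yours bypasses the exponential-series manipulation of Remark \ref{obs2} entirely and is arguably cleaner at that point; its only extra dependency is that $\uc$ is itself a group, which is immediate from $u-1,v-1\in\kh\Rightarrow uv-1\in\kh$ and is also used elsewhere in the paper (e.g.\ in Proposition \ref{prop: diagonales compactas d para e^-d e^d entre e^K y e^D}). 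No gaps.
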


\begin{proof}
Let $u\in\ukd$, then there exists $D\in \D(\bah)$ such that $u-e^D\in \kk(\h)$. Then
$$ue^{-D}-1\in \kk(\h)\Rightarrow \exists K\in\kah, \left\|K\right\|\leq \pi \text{ such that } \ ue^{-D}=e^K$$
$$\Rightarrow u=e^Ke^{D},$$ 
and therefore $u\in \uc \udiag$.

Conversely, if there exists $K'\in\kah$, 
and $D'\in \D(\bah)$ such that $u=e^{K'}e^{D'}\in \uu(\h)$, then
$$ 
ue^{-D'}=e^{K'}\in \uc\Rightarrow ue^{-D'}-1\in \kk(\h)\Rightarrow (ue^{-D'}-1)e^{D}=
u-e^{D'}\in \kk(\h).$$
These calculations prove that $\ukd=\uc.\udiag$. 

Similar computations (with left multiplication of $e^{-D}$) lead to the equality $\ukd=\udiag\uc$.

Now we will prove that $\ukd$ is a group. Let $u,v\in \ukd$:
\bit
\item There exists $D\in\D(\bah)$ such that $u-e^D\in \kk(\h)$. Then $u^*-e^{-D}\in \kk(\h)\Rightarrow u^*\in \ukd$.
\item Using that $\ukd=\uc \udiag$ we can write $u=e^{K_1}e^{D_1}$ and $v=e^{K_2}e^{D_2}$, with $K_1,K_2\in \kah$ and $D_1,D_2\in \D(\bah)$. Then using Remark \ref{obs2}
$$uv=e^{D_1+D_2}+K_1\Psi(K_1,D_1)+e^{D_2}K_2\Phi(K_2,D_2)+K_1\Psi(K_1,D_1)K_2\Phi(K_2,D_2).$$
Therefore $uv-e^{D_1+D_2}\in \kk(\h)$ which implies that 
$uv\in \ukd.$
\eit
Then, $\ukd$ is a unitary subgroup of $\uh$.
\end{proof}

\begin{prop} \label{propiedades}
Let $\ukd$, $\udiag$ and $\uda$ be as defined in 
(\ref{def: subconjuntos de operadores unitarios}), then the following statements hold:
\be 
\item $\udiag\subsetneq \ukd$.
\item $\uc\subsetneq\ukd$.
\item $\uda\subseteq\ukd$.
\item If $u\in \ukd$ then $u=K'+D'$, with $K'\in \kk(\h)$ and $D\in \D(\uu(\h))$.
\item For every $K\in\kah$ and $D\in \D(\bah)$ there exists $K'\in \kah$ such that $e^Ke^D=e^De^{K'}$.
\item $\uc\subsetneq \uda$ 
\item $\uc=\{u\in \uu(\h):\exists D\in \D(\kah)\ {\rm such\ that}\ u-e^D\in \kk(\h)\}$.

\ee
\end{prop}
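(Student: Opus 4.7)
The plan is to reduce all seven items to the factorization $\ukd=\uc\udiag=\udiag\uc$ established in Proposition \ref{prop: Ukd es un grupo y es Uk.Ud}, together with the identity $e^{K+D}=e^D+K\,\Psi(K,D)$ from Remark \ref{obs2}, in which $K\,\Psi(K,D)\in\kh$. With these tools in hand, items (1), (2) and (6) become immediate via trivial factors: taking $K=0$ shows $\udiag\subseteq\ukd$, while taking $D=0$ shows $\uc\subseteq\ukd$ and $\uc\subseteq\uda$. Item (3) is then direct from the cited identity, since $e^{K+D}-e^D\in\kh$ gives $e^{K+D}\in\ukd$. For the strict inclusions in (2) and (6), the element $-I=e^{i\pi I}$ lies in $\udiag\subseteq\uda$ yet not in $\uc$, because $-I-I=-2I$ is not compact on an infinite-dimensional $\h$. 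For strictness in (1), any non-diagonal unitary of the form $e^K$ with $K\in\kah$ non-diagonal sits in $\ukd\setminus\udiag$.

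For (4), I would again use Proposition \ref{prop: Ukd es un grupo y es Uk.Ud} to write $u=e^Ke^D$ with $K\in\kah$, $D\in\dah$, and split the left factor as $e^K=1+K''$ where $K''=\sum_{n\geq 1}\frac{K^n}{n!}\in\kh$. Then $u=e^D+K''e^D$, and because $\kh$ is a two-sided ideal one obtains the decomposition $u=K'+D'$ with $D':=e^D\in\D(\uh)$ and $K':=K''e^D\in\kh$.

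Items (5) and (7) require slightly different tools, but remain routine. For (5), the standard conjugation identity $e^{AXA^{-1}}=Ae^{X}A^{-1}$ with $A=e^{-D}$ and $X=K$ suggests setting $K':=e^{-D}Ke^D$; this gives $e^{K'}=e^{-D}e^Ke^D$ and hence $e^De^{K'}=e^Ke^D$. Compactness of $K'$ comes from the ideal property of $\kh$, and $(e^{-D})^*=e^D$ (since $D^*=-D$) yields $(K')^*=-K'$. For (7), the inclusion of $\uc$ in the right-hand set is trivial by choosing $D=0$; conversely, if $u-e^D\in\kh$ with $D\in\D(\kah)$, then $e^D-1=\sum_{n\geq 1}\frac{D^n}{n!}\in\kh$ because $D$ is itself compact, so $u-1=(u-e^D)+(e^D-1)\in\kh$ and $u\in\uc$. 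There is no real obstacle: each item is a routine consequence of the factorization in Proposition \ref{prop: Ukd es un grupo y es Uk.Ud} and the series identity in Remark \ref{obs2}, so the main effort is simply to choose the right factorization and the right auxiliary fact (ideal property, series truncation, or conjugation identity) for each individual step.
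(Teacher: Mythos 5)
Your proposal is correct, and for most items it follows the same route as the paper: (3) via the series identity of Remark \ref{obs2}, (4) via the ideal property of $\kh$, and (7) by exactly the paper's two-line argument; items (1), (2), (6) are dismissed in the paper as apparent, and you go a bit further by actually exhibiting witnesses for strictness ($-I=e^{i\pi I}\in\udiag\cap\uda$ but $-I-1=-2I\notin\kh$, and a non-diagonal $e^K$ for (1)), which is a welcome addition. The one genuinely different step is item (5): the paper observes that $e^{-D}e^{K}e^{D}-1\in\kh$, hence $e^{-D}e^{K}e^{D}\in\uc$, and then invokes the surjectivity of $\exp:\kah\to\uc$ (Proposition \ref{prop: uc es ek}) to obtain some $K'$, whereas you produce $K'=e^{-D}Ke^{D}$ explicitly via the conjugation identity, checking compactness from the ideal property and anti-Hermiticity from $(e^{D})^{*}=e^{-D}$. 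Your version is more constructive and avoids the surjectivity result; the paper's version is the one that generalizes verbatim to the setting of Remark \ref{rem: generalizacion del subgrupo ukd}, where only surjectivity of the exponential on $\uu_{\mathcal J}$ is assumed. One small caution in your strictness argument for (1): non-diagonality of $K\in\kah$ does not by itself force $e^{K}$ to be non-diagonal (a $2\times2$ antisymmetric block of norm $\pi$ exponentiates to $-I_2\oplus I$), so the witness should be, as you in fact phrase it, a unitary $e^{K}$ that is itself non-diagonal, e.g.\ a rotation by $\pi/2$ in a two-dimensional coordinate subspace.
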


\begin{proof}
\be
\item It is apparent.
\item $u\in \uc\Leftrightarrow u-1\in\kk(\h)\Leftrightarrow u-e^0\in\kk(\h)$.
\item Let $e^{K+D}$ with $K\in  \kah$ and $D\in\D(\bah)$. Then
$$
e^{K+D}=1+(K+D)+\frac{1}{2!}(K+D)^2+...=e^D+K\Psi(K,D)\Rightarrow
$$
$$\Rightarrow e^{K+D}-e^D\in\kk(\h)\Rightarrow e^{K+D}\in\ukd.$$
\item If $u\in \ukd\Rightarrow u-e^D\in \kk(\h)\ {\rm with}\ D\in \D(\bah)\Rightarrow \exists K'\in \kk(\h)/\ u=K'+e^D$. 
\item If $K\in  \kah$ and $D\in\D(\bah)$ then Proposition \ref{prop: uc es ek} implies that $e^K-1\in \kk(\h)$ which gives $(e^K -1)e^D=e^K e^D-e^D\in \kk(\h)$ and $e^{-D}e^Ke^D-1\in \kk(\h)$. Then, $e^{-D}e^Ke^D\in \uc$ and there exists $K'\in\kah$ such that $e^{-D}e^Ke^D=e^{K'}$. The result follows easily. 
\item It is apparent. 
\item If $u\in\uc$ then $u-1=u-e^0\in\kk(\h)$ with $0\in\D(\kah)$ and then $u\in \{u\in \uu(\h):\exists D\in \D(\kah)\ {\rm such\ that}\ u-e^D\in \kk(\h)\}$. Conversely, let $u\in \uu(\h)$ and $D\in \D(\kah)$ such that $u-e^D\in \kk(\h)$. Then 
$$u-1=u-e^D+e^D-1\in \kk(\h),$$
since $e^D\in \uc$, which completes the proof.
\ee
\end{proof}

\begin{prop}\label{prop: diagonales compactas d para e^-d e^d entre e^K y e^D}
 Let $K_1, K_2\in\kah$, $D_1, D_2\in \D(\bah)$, then the following statements are equivalent:
\begin{enumerate}
  \item [a) ] $e^{K_1} e^{D_1}=e^{K_2} e^{D_2}$  
\item [b) ] There exists $d\in \D(\kah)$ such that $$e^{K_2}=e^{K_1}e^{-d}\ \ \text{ and } \ \ e^{D_2}=e^d e^{D_1}=e^{d+D_1}.$$
\end{enumerate}
 
\end{prop}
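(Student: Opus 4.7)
The implication (b) $\Rightarrow$ (a) is immediate: since $d$ and $D_1$ are diagonal (hence commute), $e^{K_2}e^{D_2}=(e^{K_1}e^{-d})(e^de^{D_1})=e^{K_1}e^{D_1}$, so only one direction carries content.

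For (a) $\Rightarrow$ (b), my plan is to first rearrange the hypothesis into the identity
\begin{equation*}
e^{-K_2}e^{K_1}=e^{D_2}e^{-D_1}=e^{D_2-D_1},
\end{equation*}
where the second equality uses that $D_1,D_2\in\D(\bah)$ commute. The left-hand side belongs to $\uc$ by Proposition \ref{prop: Ukd es un grupo y es Uk.Ud} (more simply, since $\uc$ is a group and each $e^{\pm K_j}\in\uc$), so $e^{D_2-D_1}-1\in\kh$. Writing $D_1=\diag(i\alpha_j)$ and $D_2=\diag(i\beta_j)$, this forces the diagonal entries $e^{i(\beta_j-\alpha_j)}-1$ to tend to $0$, i.e.\ $\beta_j-\alpha_j\to 0\pmod{2\pi}$.

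Next I would construct the diagonal $d$ explicitly. For each $j$ choose the unique integer $n_j$ with $\tilde\gamma_j:=\beta_j-\alpha_j-2\pi n_j\in[-\pi,\pi)$. Since $\beta_j-\alpha_j\to 0\pmod{2\pi}$ we have $\tilde\gamma_j\to 0$, so $d:=\diag(i\tilde\gamma_j)$ defines an element of $\D(\kah)$ (a bounded diagonal operator lies in $\kh$ exactly when its entries converge to $0$). By construction $e^d=\diag(e^{i\tilde\gamma_j})=\diag(e^{i(\beta_j-\alpha_j)})=e^{D_2-D_1}$, so $e^d=e^{D_2}e^{-D_1}$, which (again using that diagonals commute) rearranges to $e^{D_2}=e^de^{D_1}=e^{d+D_1}$. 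Finally, substituting $e^{D_2-D_1}=e^d$ back into the rearranged hypothesis yields $e^{-K_2}e^{K_1}=e^d$, equivalently $e^{K_2}=e^{K_1}e^{-d}$, which is the remaining claim.

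The only delicate point is the construction of $d$: one must reduce $D_2-D_1$ modulo $2\pi i$ entry by entry so that the result is anti-Hermitian \emph{and compact}. Without this reduction, the naive choice $d=D_2-D_1$ need not be compact, and the statement (b) would fail since $d$ is required to lie in $\D(\kah)$ rather than just $\D(\bah)$. Once this branch choice is made, both identities in (b) follow without further effort.
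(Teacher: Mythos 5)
Your argument is correct, and it takes a genuinely different route from the paper's. You rearrange the hypothesis to $e^{-K_2}e^{K_1}=e^{D_2-D_1}$, note that the left-hand side lies in $\uc$ so that the diagonal unitary $e^{D_2-D_1}-1$ is compact, and then construct $d$ by hand, reducing each entry of $D_2-D_1$ modulo $2\pi i$ to a branch $i\tilde\gamma_j$ with $\tilde\gamma_j\in[-\pi,\pi)$; compactness of $d$ follows since $e^{i\tilde\gamma_j}\to 1$ together with $\tilde\gamma_j\in[-\pi,\pi)$ forces $\tilde\gamma_j\to 0$, and the two identities of b) drop out. The paper instead works with $e^{-K_1}e^{K_2}=e^{D_1-D_2}$, uses Proposition \ref{prop: uc es ek} to write this element as $e^{K_{1,2}}$ with $K_{1,2}\in\kah$, $\|K_{1,2}\|\leq\pi$, chooses a bounded diagonal $D_{1,2}$ with $\|D_{1,2}\|\leq\pi$ and the same exponential, and then invokes Theorem 3.1 of \cite{chiumiento_normal_logarithms} to conclude $|K_{1,2}|=|D_{1,2}|$, so that $D_{1,2}$ is simultaneously diagonal and compact and one may take $-d=D_{1,2}$. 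Your approach is more elementary and self-contained: it never leaves the diagonal picture and avoids the normal-logarithm machinery, at the cost of relying on the entrywise characterization of compact diagonals (entries tending to $0$), which is exactly why it does not transfer verbatim to the generalization in Remark \ref{rem: generalizacion del subgrupo ukd}, property (11), where $\D(\bb(\h))$ is replaced by an arbitrary C$^*$-subalgebra and no entrywise reduction modulo $2\pi i$ is available. Your closing remark that the naive choice $d=D_2-D_1$ need not be compact identifies the real content of the statement, and your branch reduction handles it correctly; the only cosmetic slip is citing Proposition \ref{prop: Ukd es un grupo y es Uk.Ud} for $e^{-K_2}e^{K_1}\in\uc$, where the group property of $\uc$ (or the direct computation $e^{-K_2}(e^{K_1}-1)+(e^{-K_2}-1)\in\kh$) is what is actually used.
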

\begin{proof}
b) $\implies$ a) is apparent after computing $e^{K_2}e^{D_2}$.

Let us consider a) $\implies$ b).

 If $e^{K_1}e^{D_1}=e^{K_2}e^{D_2}$ then $e^{D_1-D_2}=e^{-K_1}e^{K_2}$. Since $e^{-K_1}, e^{K_2}\in\uc$ which is a group, then there exists $K_{1,2}\in\kah$ such that $\|K_{1,2}\|\leq \pi$ and $e^{-K_1}e^{K_2} = e^{K_{1,2}}$ (see Proposition \ref{prop: uc es ek}). Moreover, there exists a diagonal $D_{1,2}\in\D(\bah)$ with $\|D_{1,2}\|\leq \pi$ such that $e^{D_1-D_2}=e^{D_{1,2}}$. Therefore
$$
e^{K_{1,2}} = e^{-K_1}e^{K_2} = e^{D_1-D_2}=e^{D_{1,2}}
$$
with $K_{1,2} \in \kah $ and $D_{1,2}\in \D(\bah)$. Using Theorem 3.1 in \cite{chiumiento_normal_logarithms} we can conclude that $|K_{1,2}|=|D_{1,2}|$ which implies that $K_{1,2}$ and $D_{1,2}$ are both diagonal and compact operators. If we chose $-d=D_{1,2}\in \D(\kah)$, then
$$
e^{K_2}=e^{K_1}e^{D_1-D_2}=e^{K_1}e^{D_{1,2}}=e^{K_1}e^{-d}
$$
and 
$$
e^{D_2}=e^{D_2-D_1} e^{D_1}=e^{-D_{1,2}}e^{D_1}=e^d e^{D_1}
$$
which proves the proposition.
\end{proof}

\begin{prop}\label{prop: u en ukd si compacto fuera diag y mod de diag tiende a 1} 
	Let $u\in\uh$. Then the following statements are equivalent
	\begin{enumerate}
		\item[ a)] $u\in\ukd $
		\item[ b)] $u-\diag(u) \in \kh$ and $|u_{j,j}|\underset{j\to\infty}{\to} 1$.
	\end{enumerate}
	
\end{prop}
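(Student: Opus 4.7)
The plan is to prove both implications by reducing to the elementary observation that for any $D\in\dah$, the unitary $e^D$ is diagonal with every diagonal entry of modulus one; thus $u\in\ukd$ means precisely that $u$ differs from some diagonal unitary by a compact operator.

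For a) $\Rightarrow$ b), I would fix $D\in\dah$ with $u-e^D\in\kh$ and apply the diagonal-extracting map $\Phi$ defined in the Preliminaries. The key auxiliary fact is that $\Phi$ sends $\kh$ into $\kh$: if $T\in\kh$ then $T_{jj}=\langle Te_j,e_j\rangle\to 0$ (because $e_j\to 0$ weakly and $T$ is compact, so $Te_j\to 0$ in norm), and a diagonal operator whose entries form a null sequence is compact, being a norm limit of finite-rank diagonals. Since $e^D$ is already diagonal, $\Phi(u-e^D)=\diag(u)-e^D\in\kh$, hence
$$
u-\diag(u)=(u-e^D)-\big(\diag(u)-e^D\big)\in\kh.
$$
Moreover, writing $D=\diag(\{d_j\})$ with $d_j$ purely imaginary, the entries $u_{jj}-e^{d_j}$ of $\diag(u)-e^D$ tend to $0$, and since $|e^{d_j}|=1$ the reverse triangle inequality forces $|u_{jj}|\to 1$.

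For b) $\Rightarrow$ a), I would construct $D$ explicitly. The hypothesis $|u_{jj}|\to 1$ forces $u_{jj}\neq 0$ for all but finitely many $j$; for those indices write $u_{jj}=|u_{jj}|e^{i\theta_j}$ with $\theta_j\in[0,2\pi)$, and set $\theta_j=0$ on the finite exceptional set. The operator $D=\diag(\{i\theta_j\})$ lies in $\dah$ with $\|D\|\leq 2\pi$, and $e^D=\diag(\{e^{i\theta_j}\})$. Then $\diag(u)-e^D$ is diagonal with entries $(|u_{jj}|-1)e^{i\theta_j}$ tending to $0$ (plus a finite-rank correction from the exceptional indices), hence compact. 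Combining with the compact operator $u-\diag(u)$ provided by b) yields
$$
u-e^D=\big(u-\diag(u)\big)+\big(\diag(u)-e^D\big)\in\kh,
$$
so $u\in\ukd$.

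No substantial obstacle appears; the only piece of bookkeeping is the treatment of indices with $u_{jj}=0$, but the hypothesis $|u_{jj}|\to 1$ confines these to a finite set whose contribution is automatically of finite rank.
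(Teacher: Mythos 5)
Your proof is correct and follows essentially the same route as the paper: extract the diagonal, observe that the diagonal part of a compact operator is compact (entries tend to $0$), and in the converse direction build $e^D$ from the phases $u_{jj}/|u_{jj}|$ for large $j$, exactly as the paper does with $e^D_{j,j}=\frac{1}{|u_{j,j}|}u_{j,j}$. Your explicit justification that $\Phi(\kh)\subset\kh$ and your handling of the finitely many indices with $u_{jj}=0$ are just slightly more detailed versions of steps the paper leaves implicit.
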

\begin{proof}
	
	a) $\Rightarrow$ b) If $u\in\ukd$ then there exists $D\in\dah$ such that $u-e^D\in\kh$. Then $\diag(u-e^D)_{j,j}=u_{j,j}-e^D_{j,j}\underset{j\to\infty}{\to} 0$ and therefore $\diag(u)-e^D\in\kh$ and $|u_{j,j}|\underset{j\to\infty}{\to} 1$. Then since 
	$$
	u-e^D=u-\diag(u)+ \underbrace{\diag(u)-e^D}_{\in\kh} \in\kh ,
	$$ 
	we obtain that $u-\diag(u)\in\kh$.
	
	\medskip
	b) $\Leftarrow$ a) If $u_{j,j}\underset{j\to\infty}{\to} 1$ then there exists $D\in\dah$ such that $\left(u_{j,j}-e^D_{j,j}\right)\underset{j\to\infty}{\to} 0$ (for example take $e^D_{j,j}=\frac{1}{|u_{j,j}|} u_{j,j}$ when $j$ is sufficiently large). Then $\diag(u)-e^D\in\kh$. Using the hypothesis that $\diag(u)-e^D\in\kh$, then 
	$$
	\underbrace{u-\diag(u)}_{\in\kh}+\underbrace{\diag(u)-e^D}_{\in\kh}=u-e^D\in\kh
	$$
	and therefore $u\in\ukd$.
\end{proof}

\begin{rem}
The previous proposition allow us to prove easily that $\ukd\subsetneq \uh$ since the block diagonal defined symmetry $u=\left(\begin{smallmatrix}
s & 0 & 0 &\dots \\ 
0 & s & 0 &\dots  \\
0 & 0 & s&  \vspace*{-.2cm} \\
\vdots & \vdots &  &\ddots \\
\end{smallmatrix}\right)$ with $s=\left(\begin{smallmatrix}
0 & 1   \\ 
1 & 0 
\end{smallmatrix}\right)$ clearly does not satisfy conditions b) of Proposition \ref{prop: u en ukd si compacto fuera diag y mod de diag tiende a 1}, but $u\in\uh$.
\end{rem}

%
%
%

The following proposition is a consequence of 
results present in \cite{chiumiento_normal_logarithms}.
\begin{prop}\label{prop: consecuencias de e^d e^k=e^k'e^d}
 Let $K$, $K'\in\kah$ satisfying  $\|K\|, \|K'\|\leq \pi$ and $D\in\dah$ be such that $e^D e^K=e^{K'}e^{D}$. Then $\|K\|=\|K'\|$ and
\begin{itemize}
 \item[a)] if $\|K\|=\|K'\|= \pi$, then
\begin{enumerate}
\item $|K|=e^{-D}|K'|e^D$,
\item $v\in\h$ is an eigenvector of $K$ with corresponding eigenvalue $\lambda\in i\R$, $|\lambda|<\pi$  \ $\Longleftrightarrow$ \ $e^D v$ is an eigenvector of $K'$ with corresponding eigenvalue $\lambda\in i\R$, $|\lambda|<\pi$,
\item if $E_X$ is the spectral measure of the operator $X$, then
$$
K-e^{-D}{K'}e^D=2\pi i \left(E_K(\R+i\pi)-E_{e^{-D}{K'}e^D}(\R+i\pi)\right)
$$
\end{enumerate}
 \item[b) ] and moreover, if $\|K\|=\|K'\|< \pi$ then $K=e^{-D}K'e^{D}$. 
\end{itemize}

\end{prop}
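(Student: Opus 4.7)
The plan is to reduce everything to a clean statement about two compact anti-Hermitian operators with the same exponential. From the hypothesis $e^D e^K = e^{K'} e^D$, multiply on the left by $e^{-D}$ to get
$$
e^K = e^{-D} e^{K'} e^D = e^{e^{-D}K'e^D},
$$
using that conjugation by the unitary $e^D$ commutes with the exponential series. Setting $K'' := e^{-D} K' e^D$, the operator $K''$ is again compact and anti-Hermitian, and since conjugation by a unitary preserves the spectrum and the operator norm, $\|K''\| = \|K'\| \le \pi$. So the whole problem boils down to: if $K, K'' \in \kah$ have $\|K\|, \|K''\| \le \pi$ and $e^K = e^{K''}$, what can be said about $K$ versus $K''$?

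Now I invoke Theorem~3.1 of \cite{chiumiento_normal_logarithms}, which is exactly the tool for comparing two normal logarithms of the same unitary. For anti-Hermitian operators with spectra in the closed strip $i[-\pi,\pi]$, the exponential is injective on the open strip $i(-\pi,\pi)$, and the only ambiguity occurs at the endpoints $\pm i\pi$, both of which map to $-1$. Concretely, let $V := \ker(e^K + I) = \ker(e^{K''} + I)$ (an equality since $e^K = e^{K''}$). Then on $V^\perp$ both $K$ and $K''$ have spectrum in $i(-\pi,\pi)$, so injectivity of the exponential on that region forces $K|_{V^\perp} = K''|_{V^\perp}$. On $V$, by the spectral theorem, $K = i\pi(P - Q)$ and $K'' = i\pi(P'' - Q'')$ for orthogonal projections $P+Q = I_V = P''+Q''$, where $P = E_K(\{i\pi\})$, $Q = E_K(\{-i\pi\})$, and similarly for $K''$.

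From this decomposition all claims follow by direct computation. Since $\|K\| = \pi$ iff $V \ne \{0\}$ iff $\|K''\| = \pi$, and in the opposite case $V = \{0\}$ and $K = K''$ altogether, we get $\|K\| = \|K''\| = \|K'\|$, and already case (b) $\|K\|<\pi \Rightarrow K = e^{-D}K'e^D$. For case (a), on $V$ one has $K^2 = -\pi^2 I_V = (K'')^2$, so $|K|=|K''|$ on $V$, and on $V^\perp$ this is trivial since $K=K''$; conjugating back gives $|K| = e^{-D}|K'|e^D$, proving~(1). Claim~(2) follows because any eigenvector with $|\lambda|<\pi$ lies in $V^\perp$, where $K = K''$, and the translation between eigenvectors of $K''$ and $K'$ is just $v \mapsto e^D v$. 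For~(3), using $Q = I_V - P$ and $Q'' = I_V - P''$ one computes $K - K'' = 2\pi i (P - P'')$ on $V$ and $0$ on $V^\perp$; identifying $P = E_K(\R + i\pi)$ and $P'' = E_{K''}(\R + i\pi)$ (the intersection of $\R + i\pi$ with $i\R$ is exactly $\{i\pi\}$) yields the formula.

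The only real obstacle is the step that invokes \cite{chiumiento_normal_logarithms}: one must be precise about how Theorem~3.1 there supplies the simultaneous spectral decomposition with the common invariant subspace $V$ and the identity $K|_{V^\perp} = K''|_{V^\perp}$. Everything after that is bookkeeping with spectral projections; the final subtlety worth flagging is that although $P$ and $P''$ are projections onto eigenspaces at $i\pi$, they need not commute as projections in $\h$, so one has to do the algebra on $V$ with $P+Q = P''+Q'' = I_V$ rather than try to diagonalize $K$ and $K''$ in a common basis.
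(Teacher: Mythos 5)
Your proposal is correct, and its first step coincides with the paper's: multiply by $e^{-D}$ to get $e^K=e^{e^{-D}K'e^D}$ and set $K''=e^{-D}K'e^D$, so that everything reduces to comparing two compact anti-Hermitian logarithms of the same unitary. Where you diverge is in how that comparison is carried out. The paper simply cites the relevant items of \cite{chiumiento_normal_logarithms} (Theorem 3.1 i) for $|K|=|K''|$ and hence $\|K\|=\|K'\|$ and item (1), Remark 2.4 and Theorem 4.1 for the spectral-projection formula (3), Corollary 4.2 iii) for case (b)), plus a short eigenvector computation for (2). You instead make the content of those citations explicit: you decompose $\h=V\oplus V^{\perp}$ with $V=\ker(e^K+I)$, note that $V$ is invariant for both $K$ and $K''$ (they commute with $e^K=e^{K''}$), use injectivity of $\exp$ on anti-Hermitian operators with spectrum in $i(-\pi,\pi)$ to get $K|_{V^{\perp}}=K''|_{V^{\perp}}$, and write $K|_V=i\pi(P-Q)$, $K''|_V=i\pi(P''-Q'')$ with $P+Q=P''+Q''=I_V$, from which (1), (2), (3), the norm equality and case (b) all follow by the bookkeeping you describe (correctly avoiding any assumption that $P$ and $P''$ commute). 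This is a sound, essentially self-contained argument — indeed the "obstacle" you flag about extracting the decomposition from Theorem 3.1 of \cite{chiumiento_normal_logarithms} is not really an obstacle, since for compact anti-Hermitian operators the spectral theorem already gives everything you use (eigenvalues in $i\mathbb{R}$ accumulating only at $0$, orthogonal eigenspaces, and $V=\ker(K-i\pi)\oplus\ker(K+i\pi)$ because $\|K\|\le\pi$). The trade-off: the paper's route is shorter and leans on the published results, which also cover the non-compact normal setting; your route is more elementary and makes the structure at the boundary eigenvalues $\pm i\pi$ transparent, at the cost of reproving a special case of Chiumiento's theorems.
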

  
\begin{proof}
Observe first that since $e^D e^K=e^{K'}e^{D}$ then
\begin{equation}\label{eq: exp k = exp(exp -d k' exp d)}
e^K=e^{-D}e^{K'}e^{D}=e^{e^{-D}K'e^{D}} 
\end{equation}
and therefore $|K|=|e^{-D}K'e^{D}|=e^{-D}\ |K'|\ e^{D}$ (see Theorem 3.1 i) in \cite{chiumiento_normal_logarithms}) which implies $\|K\|=\|K'\|$.
\begin{itemize}
 \item[a) ]
\begin{enumerate}
\item This is a direct consequence of \eqref{eq: exp k = exp(exp -d k' exp d)}, the fact that $\sigma(K)$ and $\sigma(e^{-D} K' e^D)$ are contained in $\mathcal{S}=\{z\in\C: -\pi\leq \text{Im}(z)\leq \pi\}$ and Theorem 3.1 i) of \cite{chiumiento_normal_logarithms}.
 \item Consider $\lambda\in\sigma(K)\subset i\R$, $|\lambda|<\pi$ and $v\in\h$ such that $Kv=\lambda v$. Then $e^K v=e^\lambda v$ and the equation \eqref{eq: exp k = exp(exp -d k' exp d)} imply that $e^\lambda$ is an eigenvalue of $e^{e^{-D}K'e^{D}}$ with eigenvector $v$. Then $\lambda\in \sigma(e^{-D}K'e^{D})$ (because $|\lambda|<1$) with eigenvector $v$. Therefore  $\lambda\in \sigma(K')$ with eigenvector $e^Dv$. The other implication follows similarly.
\item This statement follows from $\sigma(K), \sigma(e^{-D} K' e^D)\subset \mathcal{S}$, Remark 2.4 and Theorem 4.1 in \cite{chiumiento_normal_logarithms}.
\end{enumerate}

\item[b) ]
If the strict inequality $\|K\|=\|K'\|< \pi$ holds then \eqref{eq: exp k = exp(exp -d k' exp d)} and Corollary 4.2 iii) in \cite{chiumiento_normal_logarithms} imply directly that $K=e^{-D}K'e^{D}$.

\end{itemize}
\end{proof}

\begin{cor}
 Let $K$, $K'\in\kah$, $\|K\|, \|K'\|\leq \pi$ and $D\in\dah$. Then
 \begin{itemize}
 \item[a)] if $\|K\|=\|K'\|= \pi$, the following equivalence holds 
$$e^D e^K=e^{K'}e^{D}\ \Longleftrightarrow\ 
K-e^{-D}{K'}e^D=2\pi i \left(E_K(\R+i\pi)-E_{e^{-D}{K'}e^D}(\R+i\pi)\right)
$$
\item[b) ] and if $\|K\|, \|K'\|<\pi$,  the following equivalence holds
$$e^D e^K=e^{K'}e^{D}\ \Longleftrightarrow\ 
K=e^{-D}{K'}e^D$$
\end{itemize}
\end{cor}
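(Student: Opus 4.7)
The forward implications in both a) and b) are already contained in Proposition \ref{prop: consecuencias de e^d e^k=e^k'e^d}: part a)3 gives the spectral identity assumed in a), and part b) gives the equality $K=e^{-D}K'e^D$ assumed in b). So the task reduces to the two converse implications.

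For the converse in b), if $K=e^{-D}K'e^D$, then by functional calculus (or by direct power series manipulation using that $e^D$ is unitary so $(e^{-D}K'e^D)^n=e^{-D}(K')^n e^D$) we have
\begin{equation*}
e^K=e^{e^{-D}K'e^D}=e^{-D}e^{K'}e^D,
\end{equation*}
and multiplying both sides on the left by $e^D$ yields $e^De^K=e^{K'}e^D$.

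For the converse in a), set $N:=e^{-D}K'e^D$. Since $e^D$ is unitary and $K'\in\kah$, $N$ is anti-Hermitian and unitarily equivalent to $K'$, so $\sigma(N)=\sigma(K')\subset i[-\pi,\pi]$; likewise $\sigma(K)\subset i[-\pi,\pi]$. Thus both $K$ and $N$ are normal operators with spectrum in the strip $\mathcal{S}=\{z\in\C:-\pi\leq \mathrm{Im}(z)\leq\pi\}$. The hypothesis in a) is precisely
\begin{equation*}
K-N=2\pi i\bigl(E_K(\R+i\pi)-E_N(\R+i\pi)\bigr),
\end{equation*}
which is the condition characterizing the equality $e^K=e^N$ for two normal operators with spectrum in $\mathcal{S}$ (this is the content of Remark 2.4 together with Theorem 4.1 of \cite{chiumiento_normal_logarithms}, already invoked in the proof of Proposition \ref{prop: consecuencias de e^d e^k=e^k'e^d}a)3). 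Hence $e^K=e^N=e^{-D}e^{K'}e^D$, and multiplying by $e^D$ on the left gives $e^De^K=e^{K'}e^D$.

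The only potential obstacle is to confirm that the cited Theorem 4.1 of \cite{chiumiento_normal_logarithms} is really an \emph{iff} statement (so the spectral identity is both necessary and sufficient for $e^K=e^N$). Since the forward direction of Proposition \ref{prop: consecuencias de e^d e^k=e^k'e^d}a)3 already relies on that biconditional, no further work is required here.
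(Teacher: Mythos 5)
Your forward implications and your argument for part b) coincide with the paper's. The weak point is the converse in a). You reduce it to the claim that Theorem 4.1 of \cite{chiumiento_normal_logarithms} is a biconditional, and you dismiss the need to check this by asserting that the forward direction of Proposition \ref{prop: consecuencias de e^d e^k=e^k'e^d} a)(3) ``already relies on that biconditional''. That reasoning is flawed: a)(3) only uses the implication $e^K=e^N\ \Rightarrow\ $ spectral identity, so neither the paper nor your proposal has established the reverse implication, and the cited result is invoked only in that one direction. As it stands, the converse of a) is therefore not proved but outsourced to an unverified strengthening of the reference.

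The gap is small and is closed exactly as the paper does, by a direct computation instead of a citation. Set $N=e^{-D}K'e^{D}$ and rewrite the hypothesis as $K-2\pi i\, E_K(\R+i\pi)=N-2\pi i\, E_N(\R+i\pi)$. Exponentiating both sides and using that $K$ commutes with $E_K(\R+i\pi)$ and $N$ with $E_N(\R+i\pi)$, the exponentials split as $e^{K}e^{-2\pi i E_K(\R+i\pi)}=e^{N}e^{-2\pi i E_N(\R+i\pi)}$. Since $E_K(\R+i\pi)$ and $E_N(\R+i\pi)$ are projections, $e^{-2\pi i E_K(\R+i\pi)}=e^{-2\pi i E_N(\R+i\pi)}=1$, hence $e^K=e^N=e^{-D}e^{K'}e^{D}$ and so $e^{D}e^{K}=e^{K'}e^{D}$. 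Replacing your appeal to an ``iff'' version of Theorem 4.1 by this computation makes your proof complete and essentially identical to the paper's.
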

\begin{proof}
\begin{itemize}
 \item[a) ] If $e^D e^K=e^{K'}e^{D}$ then 
 $$
K-e^{-D}{K'}e^D=2\pi i \left(E_K(\R+i\pi)-E_{e^{-D}{K'}e^D}(\R+i\pi)\right)
$$ 
follows from a) (3) of the previous Proposition \ref{prop: consecuencias de e^d e^k=e^k'e^d}. 

The converse is proved using that $K-2\pi i  E_K(\R+i\pi)=e^{-D}{K'}e^D-E_{e^{-D}{K'}e^D}(\R+i\pi)$ implies that 
$$
e^{K-2\pi i  E_K(\R+i\pi)}=e^{e^{-D}{K'}e^D-2\pi i E_{e^{-D}{K'}e^D}(\R+i\pi)}
$$
and since $K$ commutes with $E_K$ and $e^{-D}{K'}e^D$ with $E_{e^{-D}{K'}e^D}$, follows that
$$
e^{K}e^{-2\pi i  E_K(\R+i\pi)}=e^{e^{-D}{K'}e^D}e^{-2\pi i E_{e^{-D}{K'}e^D}(\R+i\pi)}.
$$
Since $e^{-2\pi i  E_K(\R+i\pi)}=e^{-2\pi i E_{e^{-D}{K'}e^D}(\R+i\pi)}=1$  then
$$
e^{K}=e^{e^{-D}{K'}e^D}=e^{-D}e^{K'}e^D
$$
which ends the proof.
\item[b) ] It is apparent using the previous Proposition \ref{prop: consecuencias de e^d e^k=e^k'e^d} (b) and the fact that $e^{e^{-D}{K'}e^D}=e^{-D}e^{K'}e^D$.
\end{itemize}

\end{proof}


In \cite{thompson_Proof_of_a_conjectured_exponential_formula} Thompson proved for any $X,Y\in M_n(\C)^{ah}$ that there exist unitaries $U,V$ such that
\begin{equation}\label{thompson_Proof_of_a_conjectured_exponential_formula}
e^{X}e^{Y}=e^{U^*XU+V^*YV}.
\end{equation}
Subsequently, in \cite{antezana_larotonda_varela_Thompson} Antezana et. al. proved a generalization of \eqref{thompson_Proof_of_a_conjectured_exponential_formula} for compact operators: 
given $K_1, K_2\in \kah$, there exist unitaries $U_n,V_n$, for $n\in \N$, such that
$$e^{K_1}e^{K_2}=\lim\limits_{n\to\infty}e^{U_n^*K_1U_n+V_n^*K_2V_n}$$
where the convergence is considered in the usual operator norm.
The following proposition adds a new simple case where this equality holds.
\begin{prop}
Let $K\in\kah$ and $D\in \D(\bah)$ and suppose that there exists $\lambda\in i\R$ such that $\lim\limits_{n\to\infty}D_{nn}=\lambda$. Then, there exist unitaries $U_n,V_n$, for $n\in \N$, such that
$$e^{K}e^{D}=\lim\limits_{n\to\infty}e^{U_n^*KU_n+V_n^*DV_n}.$$
\end{prop}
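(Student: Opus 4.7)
The plan is to reduce the statement to the purely compact case already handled in \cite{antezana_larotonda_varela_Thompson} by subtracting the limit $\lambda$ off the diagonal of $D$. Concretely, I would set $\widetilde D = D - \lambda I$. Since $\widetilde D$ is diagonal with diagonal entries $D_{nn}-\lambda \to 0$, we have $\widetilde D \in \D(\kah)$, and in particular $\widetilde D \in \kah$. Because $\lambda I$ commutes with everything,
$$e^{K}e^{D} = e^{K}e^{\widetilde D}\,e^{\lambda I}.$$

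Next I would apply the Antezana--Larotonda--Varela generalization of Thompson's formula to the pair of compact anti-Hermitian operators $K$ and $\widetilde D$: there exist unitaries $U_n, V_n \in \uh$ such that
$$e^{K}e^{\widetilde D} = \lim_{n\to\infty} e^{U_n^* K U_n + V_n^* \widetilde D V_n},$$
with convergence in the operator norm. Multiplying both sides on the right by $e^{\lambda I}$ and using that $\lambda I$ is central (so it commutes with each approximating exponent),
$$e^{K}e^{D} = \lim_{n\to\infty} e^{U_n^* K U_n + V_n^* \widetilde D V_n}\cdot e^{\lambda I} = \lim_{n\to\infty} e^{U_n^* K U_n + V_n^* \widetilde D V_n + \lambda I}.$$

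The final algebraic identity to verify is
$$V_n^*\widetilde D V_n + \lambda I = V_n^* D V_n - \lambda V_n^* V_n + \lambda I = V_n^* D V_n,$$
using unitarity of $V_n$. Substituting this back yields exactly
$$e^{K}e^{D} = \lim_{n\to\infty} e^{U_n^* K U_n + V_n^* D V_n},$$
as required. There is no real obstacle here beyond isolating $\lambda I$; the only point to be careful about is that the right multiplication by $e^{\lambda I}$ can be absorbed inside the exponent precisely because $\lambda I$ is a scalar multiple of the identity and so commutes with every approximant, which is why the hypothesis is stated as convergence of the diagonal to a single limit rather than the weaker requirement $\widetilde D \in \kah$ alone.
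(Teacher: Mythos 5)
Your argument is correct and is essentially the paper's own proof: both subtract the central term $\lambda I$ to get the compact diagonal $D-\lambda I\in\D(\kah)$, apply the Thompson-type formula of \cite{antezana_larotonda_varela_Thompson} to the pair $K$, $D-\lambda I$, and then reabsorb the scalar $e^{\lambda I}$ into the exponents using unitarity of the $V_n$ and centrality of $\lambda I$. No gaps; the only cosmetic difference is that the paper works with $e^{-\lambda I}$ on the right instead of factoring out $e^{\lambda I}$.
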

\begin{proof}
Observe that $D-\lambda I\in D(\kah)$. Then, using Theorem 3.1 and Remark 3.3 in \cite{antezana_larotonda_varela_Thompson} there exist unitaries $U_n,V_n$, for $n\in \N$, such that
\begin{align*} 
e^Ke^De^{-\lambda I}&= e^Ke^{D-\lambda I}=\lim\limits_{n\to\infty}e^{U_n^*KU_n+V_n^*(D-\lambda I)V_n}\\ 
 &=\lim\limits_{n\to\infty}e^{U_n^*KU_n+V_n^*DV_n}e^{-\lambda I}.
\end{align*}
Therefore, $e^Ke^D=\lim\limits_{n\to\infty}e^{U_n^*KU_n+V_n^*DV_n}$.
\end{proof}

\begin{prop}\label{uda es grupo}
Let $K_1,K_2\in \kah$ and $D_1,D_2\in \D(\bah)$ such that $K_1+D_1$ and $K_2+D_2$ are sufficiently close to $0$ (see Definition
\ref{def: suficientemente cerca de cero}). Then, there exists $K\in \kah$ such that
$$e^{K_1+D_1}e^{K_2+D_2}=e^{K+D_1+D_2}.$$
\end{prop}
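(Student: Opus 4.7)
My plan is to apply the Baker--Campbell--Hausdorff series directly to $X = K_1+D_1$ and $Y = K_2+D_2$, which is legitimate under the hypothesis of being sufficiently close to $0$ (Definition \ref{def: suficientemente cerca de cero}), and to argue that the entire tail $\sum_{n\geq 2} c_n(X,Y)$ produces a compact anti-Hermitian operator. Combined with the linear term $c_1(X,Y) = K_1+K_2+D_1+D_2$, this will yield the claimed exponent of the form $K + D_1 + D_2$ with $K \in \kah$.

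The key algebraic observation is that the subspace $\kah + \dah$ is closed under taking commutators \emph{and} the result always lands in $\kah$. Indeed, for any $A_i = S_i + E_i$ with $S_i \in \kah$, $E_i \in \dah$ ($i=1,2$), one has
\begin{equation*}
[A_1,A_2] \;=\; [S_1,S_2] + [S_1,E_2] + [E_1,S_2] + [E_1,E_2].
\end{equation*}
The last bracket vanishes because diagonal operators commute with each other, and the first three brackets are compact since $\kh$ is a two-sided ideal of $\bh$. Anti-Hermiticity of the bracket of anti-Hermitian operators is immediate. Thus $[A_1,A_2] \in \kah$. By induction, every iterated commutator of elements of $\kah + \dah$ of depth $\geq 1$ lies in $\kah$.

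Since each $c_n$ with $n \geq 2$ is, by construction, a linear combination of iterated commutators of depth $\geq 1$ in $X$ and $Y$, the preceding observation gives $c_n(X,Y) \in \kah$ for every $n \geq 2$. The BCH series converges absolutely in operator norm by the hypothesis $\|X\|, \|Y\| < \log 2/4$ (see \eqref{cerca de 0} and \eqref{bch}), so the partial sums $\sum_{n=2}^{N} c_n(X,Y)$ form a Cauchy sequence in $\bah$ whose terms lie in $\kah$. Since $\kah$ is closed in $\bah$ in operator norm, the limit $K_\infty := \sum_{n=2}^{\infty} c_n(X,Y)$ belongs to $\kah$. Setting $K := K_1 + K_2 + K_\infty \in \kah$, the BCH formula gives
\begin{equation*}
\log\!\left(e^{K_1+D_1}e^{K_2+D_2}\right) \;=\; (K_1+D_1) + (K_2+D_2) + K_\infty \;=\; K + D_1 + D_2,
\end{equation*}
and exponentiating yields the desired identity. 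The only delicate point is the convergence of the tail in $\kah$; this is precisely why we need the closedness of $\kh$ in operator norm combined with absolute convergence of the BCH series, both of which are secured by the standing hypothesis.
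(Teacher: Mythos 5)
Your proof is correct and follows essentially the same route as the paper: apply the \bch\ series to $K_1+D_1$ and $K_2+D_2$, observe that every term $c_n$ with $n\geq 2$ lands in $\kah$ because $[D_1,D_2]=0$ and $\kh$ is a closed two-sided ideal, and conclude the exponent has the form $K+D_1+D_2$. Your way of reading off the diagonal part directly from the linear term $c_1$ (rather than invoking a vanishing-diagonal property of commutators, as the paper does) is a slightly cleaner finish, but the argument is the same in substance.
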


\begin{proof}
Let $K_1,K_2\in \kah$ and $D_1,D_2\in \D(\bah)$ such that $K_1+D_1$ and $K_2+D_2$ are sufficiently close to $0$. Using the \bch\  formula \eqref{bch}, then
$$X=log\left(e^{K_1+D_1}e^{K_2+D_2}\right)=K_1+D_1+K_2+D_2+\sum_{n\geq 2}c_n(K_1+D_1,K_2+D_2).$$

Also, observe that $c_n(K_1+D_1,K_2+D_2)\in \kah$ for every $n$, since $\kk(\h)$ is a two-sided closed ideal and $[D_1,D_2]=0$. Therefore, $X=K+D$, with $K\in \kah$ and $D\in \D(\bah)$ and 
\begin{equation} \label{eq0}
e^{K_1+D_1}e^{K_2+D_2}=e^{K+D}\in \uda.
\end{equation}
In particular $D=D_1+D_2$, since each $c_n$ is a sum of commutators and $\diag\left([A,B]\right)=0$ for every $A,B\in \bah$.
\end{proof}
%

\begin{cor} 
Let $K_1, K_2\in \kah$ and $D_1, D_2\in \D(\bah)$.
\be
\item If $K_1+D_1$ and $K_2+D_2$ are sufficiently close to $0$ (see Definition
\ref{def: suficientemente cerca de cero}), then 
\begin{equation*}
\begin{split}
e^{K_1+D_1}e^{K_2+D_2}&=e^{\tilde{K}+D_1+D_2},\ \text{with}\\ \tilde{K}&=K_1+K_2+\sum_{n\geq 2}c_n(K_1+D_1,K_2+D_2)\in \kah\\   \text{ and }
	 \diag(\tilde{K})&=\diag(K_1+K_2).
	\end{split}
	\end{equation*}

\item If $K_1$ and $D_1$ are sufficiently close to $0$, there exist $K',K''\in\kah$ such that
\begin{equation}\label{eq: e^K_1 e^D_1 = e^D_1 e^K'=e^K''+D_1}
e^{K_1}e^{D_1}=e^{D_1}e^{K'}=e^{K''+D_1}.
\end{equation}
\ee
\end{cor}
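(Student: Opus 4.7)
The plan is to derive both parts from Proposition \ref{uda es grupo} together with the explicit shape of the Baker--Campbell--Hausdorff expansion \eqref{bch}; for the first identity in (2) I would additionally invoke Proposition \ref{propiedades}(5). The hypothesis that $K_1+D_1$ and $K_2+D_2$ are sufficiently close to $0$ (Definition \ref{def: suficientemente cerca de cero}) is exactly what guarantees absolute convergence of the \bch\ series.

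For part (1), I would first apply the \bch\ formula to write
\begin{equation*}
\log\!\left(e^{K_1+D_1}e^{K_2+D_2}\right)=(K_1+K_2)+(D_1+D_2)+\sum_{n\geq 2}c_n(K_1+D_1,K_2+D_2),
\end{equation*}
and then reuse the reasoning from Proposition \ref{uda es grupo} to show that $\sum_{n\geq 2}c_n(K_1+D_1,K_2+D_2)\in\kah$. The key observation is that each $c_n$ ($n\geq 2$) is a finite linear combination of iterated commutators of $K_1+D_1$ and $K_2+D_2$; expanding by multilinearity, each such commutator breaks into iterated commutators in the letters $K_1,D_1,K_2,D_2$, the purely diagonal ones vanish because $[D_1,D_2]=0$, and every remaining term contains at least one compact factor, so lies in $\kah$ by the ideal property of $\kh$. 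Setting $\tilde K:=K_1+K_2+\sum_{n\geq 2}c_n(K_1+D_1,K_2+D_2)\in\kah$ then yields $e^{K_1+D_1}e^{K_2+D_2}=e^{\tilde K+D_1+D_2}$. The diagonal identity $\diag(\tilde K)=\diag(K_1+K_2)$ reduces to $\diag\!\left(\sum_{n\geq 2}c_n\right)=0$, which I would justify by the same argument invoked in the proof of Proposition \ref{uda es grupo}: the diagonal part of every commutator appearing in the \bch\ expansion of $(K_1+D_1,K_2+D_2)$ vanishes.

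For part (2) I would specialize part (1) to $K_2=0$, $D_2=0$ to obtain the second identity $e^{K_1}e^{D_1}=e^{K''+D_1}$ with $K''=K_1+\sum_{n\geq 2}c_n(K_1,D_1)\in\kah$. The first identity $e^{K_1}e^{D_1}=e^{D_1}e^{K'}$ is not a smallness statement at all: it is an immediate consequence of Proposition \ref{propiedades}(5), which already supplies $K'\in\kah$ with $e^Ke^D=e^De^{K'}$ for arbitrary $K\in\kah$ and $D\in\dah$.

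The only genuinely delicate step I anticipate is the diagonal identity in part (1): verifying that $\diag\!\left(\sum_{n\geq 2}c_n(K_1+D_1,K_2+D_2)\right)=0$ requires a careful inductive argument on the nesting depth of the commutators and a precise bookkeeping of how the diagonal and compact pieces interact under iterated brackets. Granting Proposition \ref{uda es grupo}, the remainder of the corollary is essentially bookkeeping.
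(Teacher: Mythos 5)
Your route is essentially the paper's own: the paper proves this corollary in one line, citing Proposition \ref{propiedades}, Proposition \ref{uda es grupo} and ``some calculations from its proof'', and your proposal spells out precisely that combination (the \bch\ expansion, compactness of the tail $\sum_{n\geq 2}c_n$ via the ideal property of $\kk(\h)$ together with $[D_1,D_2]=0$, and Proposition \ref{propiedades}(5) for $e^{K_1}e^{D_1}=e^{D_1}e^{K'}$). One slip in part (2): to obtain $e^{K_1}e^{D_1}=e^{K''+D_1}$ you must apply part (1) to the factors $e^{K_1+0}$ and $e^{0+D_1}$, i.e.\ with the first diagonal equal to $0$ and $(K_2,D_2)=(0,D_1)$; the substitution ``$K_2=0$, $D_2=0$'' as written only returns the tautology $e^{K_1+D_1}=e^{K_1+D_1}$, although the formula you display for $K''$ shows you intend the correct specialization.

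The genuine gap is the step you yourself flag as delicate. The identity $\diag\left(\sum_{n\geq 2}c_n(K_1+D_1,K_2+D_2)\right)=0$ cannot be justified by ``the diagonal part of every commutator vanishes'': $\diag([A,B])=0$ holds when one of $A,B$ is diagonal, but not in general. Concretely, for the finite-rank anti-Hermitian operators $K_1=t(E_{12}-E_{21})$ and $K_2=it(E_{12}+E_{21})$ (matrix units in the fixed basis) one has $[K_1,K_2]=2it^{2}(E_{11}-E_{22})$, hence $\diag(c_2)=\frac{1}{2}\diag([K_1,K_2])\neq 0$; taking $D_1=D_2=0$ and $t$ small gives $\diag(\tilde K)=it^{2}(E_{11}-E_{22})+O(t^{3})\neq 0=\diag(K_1+K_2)$, so no inductive bookkeeping on nesting depth can close this step (only brackets in which a diagonal letter occurs have zero diagonal, and even nested brackets such as $[K_1,[K_1,D_1]]$ need not). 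To be fair, you inherit this rather than introduce it: the proof of Proposition \ref{uda es grupo} asserts ``$\diag([A,B])=0$ for every $A,B\in\bah$'', and the paper's proof of the corollary merely refers back to those calculations. What does survive --- and is all that is used later, e.g.\ in Lemma \ref{lema: epsilon donde vale e^K e^D=e^K+D} and Proposition \ref{prop: exp es difeo local} --- is the equality $e^{K_1+D_1}e^{K_2+D_2}=e^{\tilde K+D_1+D_2}$ with $\tilde K\in\kah$ (the \bch\ tail is compact, so this splitting is legitimate), together with part (2); the extra assertion $\diag(\tilde K)=\diag(K_1+K_2)$ is false in the stated generality and should be dropped or re-proved by a different argument rather than by the commutator-diagonal claim.
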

\begin{proof}
 These equalities are due to item $(3)$ of the Proposition
\ref{propiedades}, Proposition \ref{uda es grupo} and some calculations from its proof.
\end{proof}

\begin{teo}\label{teo: ukd es cerrado}
 $\ukd$ is pathwise connected and closed in ${\uh}$.
\end{teo}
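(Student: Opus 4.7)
The plan is to prove the two properties separately, exploiting the decomposition $\ukd=\uc\,\udiag$ from Proposition \ref{prop: Ukd es un grupo y es Uk.Ud} for arc-connectedness, and the intrinsic characterization from Proposition \ref{prop: u en ukd si compacto fuera diag y mod de diag tiende a 1} for closedness.

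For arc-connectedness, the natural approach is to show every $u\in\ukd$ can be joined to the identity by a continuous path inside $\ukd$. Given $u\in\ukd$, write $u=e^K e^D$ with $K\in\kah$ and $D\in\dah$ via Proposition \ref{prop: Ukd es un grupo y es Uk.Ud}. Then define $\gamma(t)=e^{tK}e^{tD}$ for $t\in[0,1]$. Since $e^{tK}\in\uc\subset\ukd$ and $e^{tD}\in\udiag\subset\ukd$, and $\ukd$ is a subgroup, each $\gamma(t)\in\ukd$; moreover $\gamma$ is norm-continuous with $\gamma(0)=1$ and $\gamma(1)=u$.

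For closedness, suppose $u_n\to u$ in operator norm with $u_n\in\ukd$, and I want to verify the two conditions of Proposition \ref{prop: u en ukd si compacto fuera diag y mod de diag tiende a 1} for $u$. First, the diagonal compression map $\Phi(X)=\diag(X)$ is contractive on $\bb(\h)$, so $\diag(u_n)\to\diag(u)$ in operator norm, hence $u_n-\diag(u_n)\to u-\diag(u)$. Since each $u_n-\diag(u_n)\in\kh$ (by condition b) of Proposition \ref{prop: u en ukd si compacto fuera diag y mod de diag tiende a 1} applied to $u_n$) and $\kh$ is norm-closed, we conclude $u-\diag(u)\in\kh$.

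It remains to show that $|u_{jj}|\to 1$ as $j\to\infty$. This follows by a standard $3\varepsilon$-argument: given $\varepsilon>0$, pick $n$ with $\|u-u_n\|<\varepsilon/2$, so $|u_{jj}-(u_n)_{jj}|\leq\|u-u_n\|<\varepsilon/2$ for every $j$; then choose $J$ so that $|(u_n)_{jj}|>1-\varepsilon/2$ for $j\geq J$, which forces $|u_{jj}|>1-\varepsilon$. Combined with the trivial bound $|u_{jj}|\leq\|u\|=1$, this gives $|u_{jj}|\to 1$, and Proposition \ref{prop: u en ukd si compacto fuera diag y mod de diag tiende a 1} yields $u\in\ukd$. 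I expect no serious obstacle: the only subtle point is invoking the right characterization to avoid chasing a specific diagonal $D$ in $\dah$ through the limit, which Proposition \ref{prop: u en ukd si compacto fuera diag y mod de diag tiende a 1} lets us circumvent.
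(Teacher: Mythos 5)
Your proof is correct. The arc-connectedness part is exactly the paper's argument: write $u=e^Ke^D$ via Proposition \ref{prop: Ukd es un grupo y es Uk.Ud} and join it to $1$ by $\gamma(t)=e^{tK}e^{tD}$. For closedness, however, you take a genuinely different and shorter route than the paper. The paper works directly with the factorizations $u_n=e^{K_n}e^{D_n}$: it proves $\dist\bigl(\{\diag(u_0)-e^{D_n}\}_n,\kh\bigr)=0$, deduces $u_0-\diag(u_0)\in\kh$, and then builds by hand an interleaved diagonal unitary $e^{D}$ (patching together blocks of the $e^{D_{n_k}}$ along a diagonal argument in $j$ and $k$) so that $\diag(u_0)-e^{D}\in\kh$, whence $u_0-e^{D}\in\kh$. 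You instead invoke the intrinsic characterization of Proposition \ref{prop: u en ukd si compacto fuera diag y mod de diag tiende a 1} (which precedes the theorem, so this is legitimate): since $\Phi=\diag$ is contractive, $u_n-\diag(u_n)\to u-\diag(u)$ and $\kh$ is norm-closed, giving the first condition; your $\varepsilon$-argument, using $|u_{jj}-(u_n)_{jj}|\leq\|u-u_n\|$ uniformly in $j$ together with $|u_{jj}|\leq\|u\|=1$, gives $|u_{jj}|\to 1$, which is the second. What your approach buys is that the limit is taken against conditions that are manifestly stable under uniform convergence, so no specific diagonal $D$ has to be chased through the limit --- the construction of $D$ is delegated once and for all to the proof of Proposition \ref{prop: u en ukd si compacto fuera diag y mod de diag tiende a 1}; the paper's proof essentially re-derives that construction inline, at the cost of the more involved interleaving argument, but gains nothing extra beyond what the characterization already provides.
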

\begin{proof}
Every $u=e^K e^D\in\ukd$ (with $K\in\kah$ and $D\in\dah$) is connected to $1$ by the curve $\gamma(t)=e^{tK} e^{tD}$, for $t\in[0,1]$.

Consider now the closedness of $\ukd$. Let $\{u_n\}_{n\in\N}\subset\ukd$, $u_n=e^{K_n} e^{D_n}$ for $n\in\N$, $K_n\in\kah$ and $D_n\in \dah$ be a sequence such that $\displaystyle \lim_{n\to \infty}u_n= u_0$ in the usual operator norm in $B(H)$. We will prove that $u_0\in\ukd$.


Since $u_0-u_n=u_0-e^{D_n}+e^{D_n}-u_n$ tends to $0$ as $n\to\infty$ and $e^{D_n}-u_n\in\kh$, for all $n\in\N$,  then dist$\left(\{  u_0-e^{D_n}  \}_{n\in\N}\ , \ \kh \right)=0$.

Observe that 
\begin{equation}\label{eq: distancia de u_0-e^D_n a K(H)}
 \begin{split}
&\dist\left( \left\{\diag(u_0)-e^{D_n}\right\}_{n\in\N}\  , \kh\right)=\\
&
=\dist\left( \left\{\diag(u_0)-\diag(u_n)+\diag(u_n)-e^{D_n}\right\}_{n\in\N}\ , \kh\right)\\
&\leq \inf_{K\in\kh} \| \diag(u_0)-\diag(u_n)\| + \|\diag(u_n)-e^{D_n}-K\|
\end{split}
\end{equation}
for any $n\in\N$.

Note that $u_n-e^{D_n}\in\kh$ which implies that $\diag\left(u_n-e^{D_n}\right)=\diag\left(u_n\right)-e^{D_n}\in\mathcal{D}\left(\kh\right)$. 
Since $u_n\to u_0$ then $\diag(u_n)\to\diag(u_0)$.
Then the first summand in the last inequality (\ref{eq: distancia de u_0-e^D_n a K(H)}) can be chosen to be arbitrarily small for big $n$ and the infimum of the second term is zero because $\diag(u_n)-e^{D_n}\in\kh$. Then
\begin{equation}
\begin{split} 
\label{eq: dist diag u0 - expDn a compactos es cero}
\dist&\left( \left\{\diag(u_0)-e^{D_n}\right\}_{n\in\N}\ , \kh\right)=0
\\
&=\dist\left(\left\{\diag(u_0)-e^{D_n}\right\}_{n\in\N}\  , \ \mathcal{D}\left(\kh\right)\right). 
\end{split}
\end{equation}

%

Moreover, using that $u_n-e^{D_n}\in\kh$, for every $K\in\kh$ holds that
\begin{equation}
 \begin{split}
\dist&\left(u_0-\diag(u_0), \kh\right) =
\inf_{K\in\kh}\|u_0-\diag(u_0)-K\|\\
&\leq \|u_0-\diag(u_0)-u_n+e^{D_n}-K\| \\
&\leq \|u_0-u_n\| + \|e^{D_n}-\diag(u_0)-K\|  
 \end{split}
\end{equation}
Here both summands of the last term can be chosen to be arbitrarily small. It is enough to take $n$ appropriately, since $u_n\to u_0$ and the distance from 
$ \left\{\diag(u_0)-e^{D_n}\right\}_{n\in\N}$ to $\kh$ is null as seen above in \eqref{eq: dist diag u0 - expDn a compactos es cero}.
Then $\dist\left(u_0-\diag(u_0)\ , \ \kh\right)=0$ and therefore 
\begin{equation}\label{eq: u0 - diag u0 es compacto}
u_0-\diag(u_0)\in\kh.                       \end{equation}
If there exists $\delta>0$ such that for a subsequence $\{e^{D_{n_k}}\}_{k\in \N}$, for $k\in\N$, holds that $|(\diag(u_0)-e^{D_{n_k}})_{j,j}|\geq\delta$ for infinite $j\in\N$ which contradicts \eqref{eq: dist diag u0 - expDn a compactos es cero}. Therefore, given $\delta>0$, only finite $n\in\N$ satisfy that $|(\diag(u_0)-e^{D_{n}})_{j,j}|\geq\delta$ for infinite $j\in\N$. Then, if $k\in\N$ and we choose $\delta=\frac1k$, there exists $n_k\in\N$ such that if $n\geq n_k$ then $|(\diag(u_0)-e^{D_{n}})_{j,j}|\geq\frac{1}{k}$ only for finite $j\in\N$. Observe that the subsequence $n_k$ could be chosen to be strictly increasing. For each $k\in\N$, we will define a sub-index $j_k\in\N$ such that $|(\diag(u_0)-e^{D_{n_k}})_{j,j}|<\frac{1}{k}$ for all $j\geq j_k$, $j\in\N$. Moreover $j_k$ can be chosen to be strictly increasing in $k$ and $j_1>1$. Therefore, for each $k\in\N$, there exists $n_k, j_k\in\N$ such that
\begin{equation}\label{eq: cosas que cumplen nk y jk}
|(\diag(u_0)-e^{D_{n_k}})_{j,j}|<\frac{1}{k}, \text{ for all } j\geq j_k 
\end{equation}
Then define the following unitary diagonal matrix $e^D$ in terms of its $j,j$ entries (and zero elsewhere) whose construction is based in the $e^{D_{n_k}}$, and the corresponding $j_k$ mentioned above:
\begin{equation}\label{def: definicion de exp(D) que aproxima a diag(u0)}
(e^D)_{j,j}=\left\{\begin{array}{ll}
             1 , & \text{ if } 1\leq j<j_1\\
             \left(e^{D_{n_1}}\right)_{j,j} , & \text{ if } j_1\leq j<j_2\\
             \left(e^{D_{n_2}}\right)_{j,j} , & \text{ if } j_2\leq j<j_3\\
             \cdots & \cdots\\
             \left(e^{D_{n_k}}\right)_{j,j} , & \text{ if } j_k\leq j<j_{k+1}\\
             \cdots & \cdots
            \end{array}\right. .
\end{equation}
$D$ can be chosen as the anti-Hermitian diagonal matrix formed with the corresponding parts of $0$, $D_{n_1}$, $D_{n_2}$, $\dots$, $D_{n_k}$, \dots.

If we define $j_0=1$ and take any $j\in\N$, then equation  \eqref{eq: cosas que cumplen nk y jk} and definition \eqref{def: definicion de exp(D) que aproxima a diag(u0)} imply that
$$
|(\diag(u_0)-e^{D})_{j,j}|=|(\diag(u_0)-e^{D_{n_k}})_{j,j}|<\frac{1}{k}, \ \   \text{ provided } j_k\leq j<j_{k+1}
$$
Then $(\diag(u_0)-e^{D})_{j,j}\to 0$ as $j\to\infty$, and therefore $\diag(u_0)-e^{D}\in\kh$ since it is a diagonal matrix.

Using that also $u_0-\diag(u_0)\in\kh$ (see \eqref{eq: u0 - diag u0 es compacto}) we conclude that $\displaystyle\left(u_0-\diag(u_0)\right)+\left(\diag(u_0)-e^{D}\right)= u_0- e^D \in \kh$, which implies that $u_0\in\ukd$ and therefore $\ukd$ is closed.
\end{proof}

\begin{lem}\label{lema: epsilon donde vale e^K e^D=e^K+D}
 There exists $ \varepsilon_0>0$ and 
  $\delta_0>0$ such that if $\|u-1\|<\varepsilon_0$ and $u\in\ukd$ then there 
  exists $K, K'\in \kah$, $D\in \dah$ with $\|K\|, \|K'\|, \|K'+D\| <\delta_0$ 
  and $\|K'\|<2 \delta_0$, such that 
\begin{enumerate}
 \item [a)] $u=e^K e^D=e^{K'+D}$ with $K, D\in \exp^{-1}\left( B(1,3 
 \varepsilon_0)\right)$,
\item [b)] $K, K'$ and $ D$ are sufficiently close to $0$ and
\item [c)]  
$(K'+D)\in\exp^{-1}\left(B(1,\varepsilon_0)\right)\cap
 \bah$.
\end{enumerate}

\end{lem}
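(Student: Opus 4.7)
The plan is to construct the decomposition $u=e^{K+D}$ in two stages. First, I extract a diagonal unitary factor $e^D\in\udiag$ chosen so that $e^D$ carries the phases of the diagonal entries of $u$; this makes $v:=ue^{-D}\in\uc$. Then I combine $v=e^{K_1}$ and $e^D$ into a single exponential by invoking Proposition \ref{uda es grupo}, obtaining $u=e^{K+D}\in\uda$. The bounds (a), (b), (c) then follow from elementary norm tracking, provided $\varepsilon_0$ is chosen small enough.

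More explicitly, suppose $u\in\ukd$ with $\|u-1\|<\varepsilon_0<1$. Each diagonal entry satisfies $|u_{jj}-1|<\varepsilon_0$, so $u_{jj}$ lies in the right half-plane and can be written as $u_{jj}=r_je^{i\theta_j}$ with $r_j\in(1-\varepsilon_0,1]$ and $|\theta_j|<\pi/2$. I set $D\in\dah$ with $D_{jj}=i\theta_j$. From the identity $|u_{jj}-1|^2=(r_j-1)^2+2r_j(1-\cos\theta_j)$ one reads off $\sin^2(\theta_j/2)\le\varepsilon_0^2/(4(1-\varepsilon_0))$, yielding the two key elementary estimates $\|D\|\le\pi\varepsilon_0/(2\sqrt{1-\varepsilon_0})$ and $\|e^D-1\|=\sup_j 2|\sin(\theta_j/2)|\le\varepsilon_0/\sqrt{1-\varepsilon_0}$.

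Setting $v=ue^{-D}$, I check that $v\in\uc$. By Proposition \ref{prop: u en ukd si compacto fuera diag y mod de diag tiende a 1} the hypothesis $u\in\ukd$ gives $u-\diag(u)\in\kh$ and $|u_{jj}|\to 1$. The latter ensures that the diagonal matrix $\diag(u)-e^D$, whose $j$-th entry is $(r_j-1)e^{i\theta_j}$, lies in $\kh$. Adding the two gives $u-e^D\in\kh$ and hence $v-1=(u-e^D)e^{-D}\in\kh$, i.e.\ $v\in\uc$. Since $\|v-1\|=\|u-e^D\|\le\|u-1\|+\|e^D-1\|$ is small, the principal logarithm $K_1:=\log v\in\kah$ is defined with $\|K_1\|\le-\log(1-\|v-1\|)$.

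Finally, for $\varepsilon_0$ small enough $\|K_1\|$ and $\|D\|$ both lie below $\log 2/4$, so Proposition \ref{uda es grupo} (applied with $D_1=K_2=0$ and $K_1,D_2=D$) gives $u=e^{K_1}e^D=e^{K+D}$ for some $K\in\kah$, establishing $u\in\uda$. The BCH identity forces $K+D=\log u$, so (c) is immediate from $\|u-1\|<\varepsilon_0$, and $K=\log u-D$ satisfies $\|K\|\le-\log(1-\varepsilon_0)+\pi\varepsilon_0/(2\sqrt{1-\varepsilon_0})=(1+\pi/2)\varepsilon_0+O(\varepsilon_0^2)$. The hard part will be (a): although the leading constant $1+\pi/2\approx 2.57$ in the bound for $\|K\|$ is strictly less than $3$, to pass from $\|K\|$ to $\|e^K-1\|\le e^{\|K\|}-1$ without losing the estimate $\|e^K-1\|<3\varepsilon_0$ one must require $\varepsilon_0$ to be smaller than an explicit threshold; making $\varepsilon_0$ small enough to guarantee $(1+\pi/2)\varepsilon_0+O(\varepsilon_0^2)<\log(1+3\varepsilon_0)$ simultaneously secures the side condition $\|K\|,\|D\|<\log 2/4$ required in (b).
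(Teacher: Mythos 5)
Your proof is correct, but it reaches the factorization by a genuinely different route than the paper. The paper starts from the factorization $u=e^{K_1}e^{D_1}$ furnished by $\ukd=\uc\,\udiag$, shows by contradiction (using that $e^{K_1}-1\in\kh$ forces the diagonal of $e^{K_1}$ to tend to $1$) that $|e^{D_1}_{j,j}-1|\geq 2\varepsilon_0$ can happen only finitely often, and then corrects $D_1$ by a compact anti-Hermitian diagonal $d$ via Proposition \ref{prop: diagonales compactas d para e^-d e^d entre e^K y e^D} to obtain a new factorization $u=e^{K_2}e^{D_2}$ with $\|e^{D_2}-1\|\leq 2\varepsilon_0$, $\|e^{K_2}-1\|<3\varepsilon_0$, after which logarithms are taken inside an inverse-function-theorem neighborhood and combined by BCH. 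You instead build the diagonal factor directly: $D$ is defined from the phases of $\diag(u)$, and Proposition \ref{prop: u en ukd si compacto fuera diag y mod de diag tiende a 1} (namely $u-\diag(u)\in\kh$ and $|u_{j,j}|\to 1$) is what guarantees $ue^{-D}\in\uc$, bypassing both the contradiction argument and Proposition \ref{prop: diagonales compactas d para e^-d e^d entre e^K y e^D}; the principal logarithm series replaces the abstract diffeomorphism radius $\delta_0$, which buys you explicit constants ($\|D\|\lesssim \tfrac{\pi}{2}\varepsilon_0$, $\|K\|\lesssim(1+\tfrac{\pi}{2})\varepsilon_0<3\varepsilon_0$ up to higher order) and an explicitly computable admissible $\varepsilon_0$, whereas the paper's argument stays within its already-developed factorization machinery. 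One point you state tersely: ``the BCH identity forces $K+D=\log u$'' is really the uniqueness of anti-Hermitian logarithms of small norm (both $K+D$ from Proposition \ref{uda es grupo} and the principal $\log u$ are anti-Hermitian, exponentiate to $u$, and have spectrum in $i(-\pi,\pi)$); this is exactly the Corollary 4.2 of \cite{chiumiento_normal_logarithms} step the paper uses for item c), and it deserves a sentence, though it is not a gap.
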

\begin{proof}
Let us fix $\delta_0>0$ such that fulfills two conditions:
\begin{enumerate}
	\item One of them is that if $V\in B(0,\delta_0)\cap \bah$ then $V$ is 
	sufficiently close to $0$ as in Definition \ref{def: suficientemente cerca 
	de cero}.
	\item The other one is that 
	$\exp:B(0,\delta_0)\cap \bah \to \exp\left(B(0,\delta_0)\right) \cap\uh$ is 
	a diffeomorfism considering the usual operator norm.
\end{enumerate}   The last requirement can be fulfilled after applying the 
inverse map theorem for Banach spaces. 

Then define $\varepsilon_0=\varepsilon>0$ such that  
\begin{equation}
 \label{def: el epsilon del lema}
B(1,\varepsilon)\subset B(1,3\varepsilon) \subset \exp\left(B(0,\delta_0) \right).
\end{equation}
If we take $u\in\ukd\cap B(1,\varepsilon)$, then there exists $K_1\in\kah$ and $D_1\in\dah$ such that $u=e^{K_1}e^{D_1}$. Observe that the $j,j$ entries of the diagonal of $u=e^{K_1}e^{D_1}$ are $e^{K_1}_{j,j} e^{D_1}_{j,j}$.

Then $\|u-1\|<\varepsilon$ implies that $|e^{K_1}_{j,j} e^{D_1}_{j,j}-1|<\varepsilon$ for all $j\in\N$. Suppose that $|e^{D_1}_{j,j}-1|\geq 2\varepsilon$ for infinite $j\in\N$. Then, using that $|e^{D_1}_{j,j}|=1$ we obtain that $|e^{-D_1}_{j,j}-1|=\left|e^{D_1}_{j,j}\left(e^{-D_1}_{j,j}-1\right)\right|=|1-e^{D_1}_{j,j}|\geq 2 \varepsilon$, and that $|e^{K_1}_{j,j}-e^{-D_1}_{j,j}|=\left|\left(e^{K_1}_{j,j}-e^{-D_1}_{j,j}\right)e^{D_1}_{j,j}\right|=|e^{K_1}_{j,j} e^{D_1}_{j,j}-1|<\varepsilon$ for infinite $j\in\N$.
Therefore, there must exist infinite $j\in\N$ such that
\begin{equation}\label{eq: desigualdades con menor que eps y mayor que 2eps}
|e^{K_1}_{j,j}-e^{-D_1}_{j,j}|<\varepsilon \text{  and  }
|e^{-D_1}_{j,j}-1|\geq 2\varepsilon
\end{equation}
(see Figure \ref{fig:es_o_no_de_Lie}).
\begin{figure}[h]
\includegraphics[width=.5\textwidth]{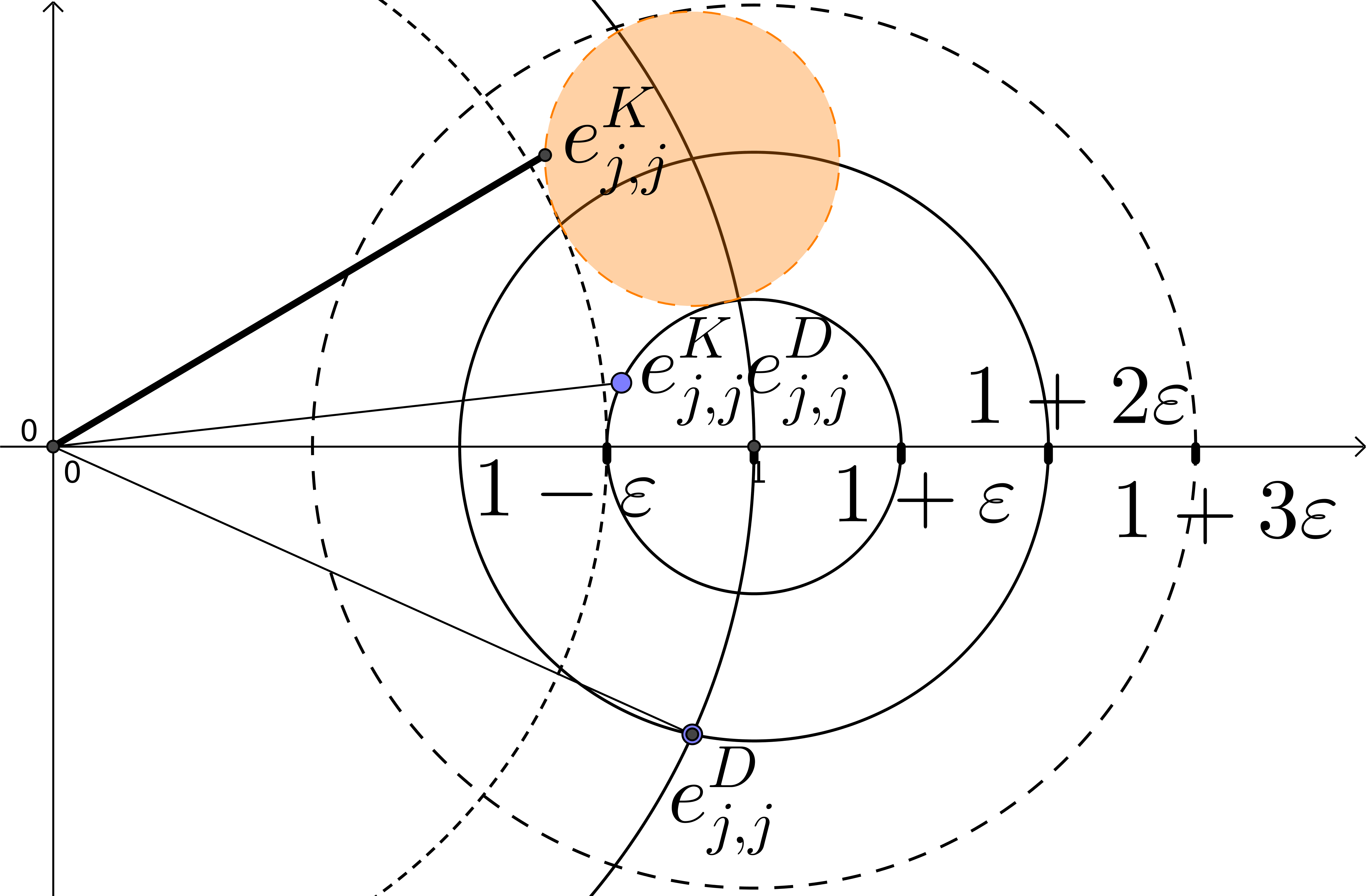}
	\caption{}\label{fig:es_o_no_de_Lie}
\end{figure}

Therefore
\begin{equation*}
\begin{split}
|e^{K_1}_{j,j}-1|&=
\left| e^{K_1}_{j,j}-e^{-D_1}_{j,j}+e^{-D_1}_{j,j}-1\right|
\geq
\left| |e^{K_1}_{j,j}-e^{-D_1}_{j,j}|-|e^{-D_1}_{j,j}-1|\right|\\
&=|e^{-D_1}_{j,j}-1|-|e^{K_1}_{j,j}-e^{-D_1}_{j,j}|\geq
2\varepsilon - \varepsilon=\varepsilon.
\end{split}
\end{equation*}
for infinite $j\in\N$, where we used \eqref{eq: desigualdades con menor que eps y mayor que 2eps} in the last equality and inequality. This is a contradiction because $e^{K_1}\in\uc$ and then $e^{K_1}-1\in\kh$ which implies that the diagonal of $e^{K_1}$ tends to $1$. Then $|e^{D_1}_{j,j}-1|\geq 2\varepsilon$ only for finite $j\in\N$. Choosing appropriately a compact anti-Hermitian diagonal $d$ we can construct $D_2=D_1+d\in\dah$ and $K_2\in\kah$ such that $u=e^{K_1}e^{D_1}=e^{K_2}e^{D_2}$ and  $|e^{D_2}_{j,j}-1|\leq 2\varepsilon$ for all $j\in\N$ (see Proposition \ref{prop: diagonales compactas d para e^-d e^d entre e^K y e^D}).
Then, $\|e^{D_2}-1\|<2\varepsilon$ and $\|e^{D_2}-1\|=\|e^{-D_2}\left(e^{D_2}-1\right)\|=\|1-e^{-D_2}1\|=\|e^{-D_2}-1\|<2\varepsilon$. Moreover, since $e^{D_2}$ is unitary, 
 \begin{equation}
 \label{eq: cota de e^K_2 - 1<3 epsilon}
 \begin{split}
\| e^{K_2}-1\|\leq&\| e^{K_2}-e^{-D_2}\|+\|e^{-D_2}-1\|
\\=&\left\| \left(e^{K_2}-e^{-D_2}\right)e^{D_2}\right\|+\|e^{-D_2}-1\|\\
=&\left\|e^{K_2}e^{D_2}-1\right\|+\|e^{-D_2}-1\|
\\=& \left\|u-1\right\|+\|e^{-D_2}-1\| <\varepsilon+2\varepsilon  =3 \varepsilon
 \end{split}
\end{equation}
We obtained that $\|e^{D_2}-1\|<2\varepsilon$ and $\|e^{K_2}-1\|<3\varepsilon$ which implies that $e^{D_2}$ and $e^{K_2}\in\exp\left(B(0,\delta_0)\right)$ (see the definition of $\varepsilon$ in \eqref{def: el epsilon del lema}. 
Therefore, using that 
$\exp: B(0,\delta_0)\cap \bah \to \exp\left(B(0,\delta_0) \right)\cap \uh$ is a 
diffeomorfism, there exist unique $D$ and $K$ in 
$\exp^{-1}\left(B(0,3\varepsilon)\right)\cap\bah\subset B(0,\delta_0)\cap\bah$ 
such that $e^D=e^{D_2}$ and $e^K=e^{K_2}$. Standard calculations can show that 
under these conditions $D$ must be diagonal and $K$ compact. Hence $D\in 
\exp^{-1}\left(B(0,3\varepsilon)\right)\cap\dah $ and $K\in 
\exp^{-1}\left(B(0,3\varepsilon)\right)\cap\kah$. Moreover, since $D,K \in 
B(0,\delta_0)$ they are sufficiently close to $0$. Then using \eqref{eq: e^K_1 
e^D_1 = e^D_1 e^K'=e^K''+D_1}  there exists $K'\in\kah$ such 
that
$$
u=e^K e^D=e^{K'+D} \in \uda.
$$
Observe that $K, D\in \exp^{-1}\left( 
B(0,3 \varepsilon)\right)$ and $K, D$ are sufficiently close to $0$ as required 
in 
items a) and b) of the lemma. Moreover, since $u$ 
belongs to the neighborhood $\ukd\cap B(1,\varepsilon)\subset 
\exp(B(0,\delta_0))$ where the exponential is a diffeomorphism, the exponent 
$K'+D$ is unique in $\exp^{-1}( B(1,\varepsilon) ) \subset B(0,\delta_0)\cap 
\bah$ (which proves item c) ), and therefore 
$\|K'+D\|<\delta_0$. This in turn implies that 
$$
\|K'\|\leq 
\|K'+D\|+\|-D\|<2\delta_0
.
$$

Note that if we originally considered $\delta_0$ even 
smaller in our original assumptions (for example, modifying our requirement (1) 
taking $\delta_0$ such 
that if $V\in 
B(0,2\delta_0)\cap \bah$ then $V$ is 
sufficiently close to $0$), then this last inequality would 
imply that $K'$ 
would be 
also sufficiently 
close to $0$ which ends the proof of item b). 
%
\end{proof}
\begin{prop}\label{prop: exp es difeo local}
 There exists $\mathcal{V}\subset \bah$ an open neighborhood of $0$ such that 
$$ 
\exp\left(\mathcal{V}\cap \left(\kah + \dah\right) \right) 
=
\exp\left(\mathcal{V}\right) \cap \ukd
$$
\end{prop}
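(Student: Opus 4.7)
The plan is to choose $\mathcal{V}$ as a sufficiently small open ball around $0$ in $\bah$ and combine Lemma \ref{lema: epsilon donde vale e^K e^D=e^K+D} with the local diffeomorphism property of the exponential map. First, I would fix $\varepsilon_0 > 0$ as provided by Lemma \ref{lema: epsilon donde vale e^K e^D=e^K+D}, and then pick $\delta_0 > 0$ small enough that $\exp : B(0,\delta_0) \cap \bah \to \exp(B(0,\delta_0)) \cap \uh$ is a diffeomorphism (via the inverse mapping theorem, as in the setup of Lemma \ref{lema: epsilon donde vale e^K e^D=e^K+D}). I would then take $\mathcal{V} = B(0,\delta) \cap \bah$ for $\delta > 0$ chosen small enough so that simultaneously $\mathcal{V} \subseteq \exp^{-1}(B(1,\varepsilon_0))$ and $\mathcal{V}$ is contained in the diffeomorphism region from Lemma \ref{lema: epsilon donde vale e^K e^D=e^K+D}.

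For the inclusion $\exp(\mathcal{V} \cap (\kah + \dah)) \subseteq \exp(\mathcal{V}) \cap \ukd$, I take $V \in \mathcal{V} \cap (\kah + \dah)$, write $V = K + D$ with $K \in \kah$ and $D \in \dah$, and apply item (3) of Proposition \ref{propiedades} to conclude $e^V = e^{K+D} \in \uda \subseteq \ukd$. Clearly $e^V \in \exp(\mathcal{V})$, and this direction is done.

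For the reverse inclusion, let $u \in \exp(\mathcal{V}) \cap \ukd$, so $u = e^V$ for some $V \in \mathcal{V}$ and $\|u - 1\| < \varepsilon_0$ by our choice of $\delta$. Lemma \ref{lema: epsilon donde vale e^K e^D=e^K+D} yields $K \in \kah$ and $D \in \dah$ with $u = e^{K+D}$ and, crucially, $K + D \in \exp^{-1}(B(1,\varepsilon_0)) \cap \bah$. Since both $V$ and $K + D$ lie in $\exp^{-1}(B(1,\varepsilon_0)) \cap \bah$, which is contained in the region where $\exp$ restricted to $\bah$ is a diffeomorphism, the injectivity of $\exp$ there forces $V = K + D \in \kah + \dah$. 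Hence $V \in \mathcal{V} \cap (\kah + \dah)$ and $u = e^V \in \exp(\mathcal{V} \cap (\kah + \dah))$.

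The main obstacle is ensuring that the neighborhood $\mathcal{V}$ is simultaneously compatible with two constants: the $\varepsilon_0$ of Lemma \ref{lema: epsilon donde vale e^K e^D=e^K+D} (needed to write $u = e^{K+D}$) and the size of a ball on which $\exp$ is a local diffeomorphism (needed to identify $V$ with $K + D$). Both requirements are met by shrinking $\delta$ enough; the key observation is that the uniqueness clause in part c) of Lemma \ref{lema: epsilon donde vale e^K e^D=e^K+D} places $K + D$ precisely in the injectivity region, which is exactly what one needs to close the argument.
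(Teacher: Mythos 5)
Your proof is correct and follows essentially the same route as the paper: the easy inclusion via item (3) of Proposition \ref{propiedades}, and the hard inclusion via Lemma \ref{lema: epsilon donde vale e^K e^D=e^K+D} part c) together with injectivity of $\exp$ on a small ball to conclude $V=K+D$. The only cosmetic difference is that you take $\mathcal{V}$ to be a small ball $B(0,\delta)\cap\bah$ inside the region, whereas the paper takes $\mathcal{V}=\exp^{-1}\left(B(1,\varepsilon_0)\right)\cap\bah$ directly.
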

\begin{proof}
Take $\mathcal{V}=\exp^{-1}\left(B(1,\varepsilon_0)\right)\cap \bah$, where 
$\varepsilon_0$ is the one from Lemma \ref{lema: epsilon donde vale e^K 
e^D=e^K+D}. Then, as seen in that lemma, for every $u\in\ukd$, and 
$u=e^{K_1}e^{D_1}\in B(1,\varepsilon_0)$ with $K_1\in\kah$ and $D_1\in\dah$, 
there exist $K\in\kah\cap \exp^{-1}\left(B(1,3\varepsilon_0)\right)$, $
K'\in \kah$ and $D\in\dah\cap 
 \exp^{-1}\left(B(1,3\varepsilon_0)\right)$ such that
\begin{equation}
 \label{eq: u es e^k+d}
u=e^K e^D=e^{K'+D} ,  \ \text{ with } 
(K+D)\in\exp^{-1}\left(B(1,\varepsilon_0)\right)\cap \bah.
\end{equation}
 Suppose first that 
$V\in\mathcal{V}$ and $e^V\in\exp\left(\mathcal{V}\right) \cap \ukd$. Then, as commented in \eqref{eq: u es e^k+d}, there exists $K\in\kah$ and $D\in \dah$ such that $e^V=e^{K+D}$. Since the exponential is a diffeomorfism restricted to the neighborhood $\mathcal{V}$ then $V=K+D$. Therefore 
$$ 
\exp\left(\mathcal{V}\right) \cap \ukd
\subset
\exp\left(\mathcal{V}\cap \left(\kah + \dah\right) \right).
$$

Now suppose that $V\in\mathcal{V}$ and $$
e^V=e^{K+D}\in 
\exp\left(\mathcal{V}\cap \left(\kah + \dah\right) \right)$$ 
with $K\in\kah$ and $D\in\dah$. Then clearly $e^{V}\in \exp\left(\mathcal{V}\right)$ and using (3) from Properties \ref{propiedades} we obtain that also $e^V=e^{K+D}\in\ukd$ holds. This proves that $ 
\exp\left(\mathcal{V}\cap \left(\kah + \dah\right) \right) 
\subset
\exp\left(\mathcal{V}\right) \cap \ukd
$ which concludes the proof.

\end{proof}

\begin{prop}\label{prop: el algebra de Lie de ukd es kah+dah}
 $\{ X\in\bah: e^{t X}\in \ukd , \forall t\in\R\}=\kah+\dah$.
\end{prop}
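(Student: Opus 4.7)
The plan is to prove the two set inclusions separately. The inclusion $\kah + \dah \subseteq \{X \in \bah : e^{tX} \in \ukd\ \forall t \in \R\}$ is immediate from item (3) of Proposition \ref{propiedades}: given $X = K + D$ with $K \in \kah$ and $D \in \dah$, for every $t \in \R$ one has $tK \in \kah$ and $tD \in \dah$, so $e^{tX} = e^{tK + tD} \in \ukd$.

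For the reverse inclusion, I would localize at $t = 0$ and appeal to Proposition \ref{prop: exp es difeo local}. Suppose $X \in \bah$ satisfies $e^{tX} \in \ukd$ for all real $t$. Let $\mathcal{V}$ be the open neighborhood of $0$ in $\bah$ provided there, which satisfies $\exp(\mathcal{V} \cap (\kah + \dah)) = \exp(\mathcal{V}) \cap \ukd$ and on which $\exp$ is a diffeomorphism. By continuity of $t \mapsto tX$ at $t = 0$, there exists $t_0 > 0$ such that $t_0 X \in \mathcal{V}$. Then $e^{t_0 X}$ lies in $\exp(\mathcal{V}) \cap \ukd = \exp(\mathcal{V} \cap (\kah + \dah))$, so $e^{t_0 X} = e^W$ for some $W \in \mathcal{V} \cap (\kah + \dah)$. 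Injectivity of $\exp$ on $\mathcal{V}$ forces $t_0 X = W$, hence $t_0 X \in \kah + \dah$. Since $\kah + \dah$ is a real linear subspace, dividing by $t_0$ gives $X \in \kah + \dah$.

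The real obstacle is entirely upstream: Lemma \ref{lema: epsilon donde vale e^K e^D=e^K+D} and Proposition \ref{prop: exp es difeo local} carry the analytic weight, particularly the delicate bookkeeping that converts a factorization $e^{K_1}e^{D_1}$ with $\|u - 1\|$ small into a genuine single exponential $e^{K + D}$ with $K, D$ both inside the region where $\exp$ is a diffeomorphism, relying on Theorem 3.1 and Corollary 4.2 of \cite{chiumiento_normal_logarithms}. Given those tools, the present statement reduces to a standard one-parameter subgroup observation: trap $tX$ inside $\mathcal{V}$ for small $t$, pull back across the local diffeomorphism, and rescale by $1/t_0$.
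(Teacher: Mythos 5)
Your argument is correct and follows essentially the same route as the paper: the easy inclusion via item (3) of Proposition \ref{propiedades}, and for the converse a localization at small $t$ so that $e^{t_0X}$ falls in the region where elements of $\ukd$ admit the normal form $e^{K+D}$, followed by injectivity of $\exp$ near $0$ and rescaling by $1/t_0$. The only cosmetic difference is that the paper invokes Lemma \ref{lema: epsilon donde vale e^K e^D=e^K+D} and its constant $\delta_0$ directly, whereas you route through Proposition \ref{prop: exp es difeo local}; note only that the injectivity of $\exp$ on $\mathcal{V}$ you use comes from the construction of $\mathcal{V}$ in that proposition's proof rather than from its bare statement.
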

\begin{proof}
The property (3) of Proposition \ref{propiedades} directly implies that $e^{t(K+D)}=e^{tK+tD}\in \ukd$ for all $t\in\R$ and therefore $\kah+\dah\subset L(\ukd)$.

Suppose now that $X\neq 0$ ($0$ is a trivial case) and let $X\in L(\ukd)$, then $e^{t X}\in \ukd , \forall t\in\R$. In particular $e^{t X}\in \ukd$ holds for small $|t|$, for example for $t_0=\frac{\delta_0}{2\|X\|}<\frac{\delta_0}{\|X\|}$ where $\delta_0>0$ is the constant used in the proof of Lemma \ref{lema: epsilon donde vale e^K e^D=e^K+D}. Then, $\|t_0 X\|=|t_0| \|X\|<\delta_0$ and $u=e^{t_0 X}\in\ukd$. Therefore using Lemma \ref{lema: epsilon donde vale e^K e^D=e^K+D}, there exists $K\in\kah$ and $D\in\dah$ such that 
$$
e^{t_0 X} = e^{K+D}.
$$
The constant $\delta_0$ of the proof of Lemma \ref{lema: epsilon donde vale e^K e^D=e^K+D} is chosen such that $\exp:B(0,\delta_0)\cap \bah \to \exp\left(B(0,\delta_0)\right) \cap\uh$ is a diffeomorfism. Then $t_0 X=K+D$ and therefore $X=1/t_0 (K+D)=1/t_0 K+1/t_0 D\in \kah+\dah$ as required.
\end{proof}

\begin{rem}\label{def subgrupo de Lie}
Following V.2.3 \cite{neeb} and  \cite{neeb_pianzola} (page 428) we call $H$ a Lie subgroup of $G$ if $H$ is a closed subgroup of a Banach--Lie group $G$ which is itself a Lie group relative to the induced topology.
\end{rem}

Therefore the previous results allow us to state the following.

\begin{teo}\label{teo: ukd es subgrupo de Lie}
$\ukd$ is a Lie subgroup of $\uu(\h)$ and its Lie algebra is $L(\ukd)=\kah+\dah$.
\end{teo}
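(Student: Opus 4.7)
The plan is to assemble the four ingredients already established in the preceding results and verify that they match the definition of Lie subgroup recalled in Remark \ref{def subgrupo de Lie}. Concretely I need to check: (i) $\ukd$ is a subgroup of $\uh$; (ii) $\ukd$ is closed in $\uh$ in the operator norm topology; (iii) $\ukd$ carries the structure of a Banach--Lie group with respect to the induced topology; (iv) its Lie algebra is $\kah+\dah$.

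Items (i) and (ii) are immediate: (i) is Proposition \ref{prop: Ukd es un grupo y es Uk.Ud} and (ii) is the closedness statement in Theorem \ref{teo: ukd es cerrado}. Since $\uh$ is itself a Banach--Lie group, this already shows that $\ukd$ is a closed subgroup of a Banach--Lie group. For (iv), the characterization of $L(\ukd)$ via one-parameter subgroups is exactly Proposition \ref{prop: el algebra de Lie de ukd es kah+dah}, which tells us that $L(\ukd)=\kah+\dah$ as a Banach space (it is closed in $\bah$ since both $\kah$ and $\dah$ are closed and their sum can be checked to be closed via the characterization given in Proposition \ref{prop: u en ukd si compacto fuera diag y mod de diag tiende a 1}).

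The substantive point is (iii): producing the Banach--Lie group structure on $\ukd$. Here the key tool is Proposition \ref{prop: exp es difeo local}, which provides an open neighborhood $\mathcal{V}\subset\bah$ of $0$ such that $\exp\bigl(\mathcal{V}\cap(\kah+\dah)\bigr)=\exp(\mathcal{V})\cap \ukd$. Since $\exp$ is a diffeomorphism from $\mathcal{V}$ onto an open neighborhood of $1$ in $\uh$, its restriction $\exp\colon \mathcal{V}\cap(\kah+\dah)\to \exp(\mathcal{V})\cap \ukd$ is a homeomorphism onto a neighborhood of $1$ in $\ukd$ (with the induced topology) and serves as a chart around the identity modelling $\ukd$ on the Banach space $\kah+\dah$. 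I then define an atlas on $\ukd$ by left translations: for each $u_0\in\ukd$ the map $X\mapsto u_0\exp(X)$ from $\mathcal{V}\cap(\kah+\dah)$ to $\ukd$ is a chart around $u_0$. The change-of-chart maps $X\mapsto \log\bigl(u_0^{-1}u_1\exp(X)\bigr)$ are analytic on their domain because they are compositions of left translation in $\uh$ (analytic in the ambient Banach--Lie group) with $\log$ (analytic near $1$), and they take values in $\kah+\dah$ by Proposition \ref{prop: exp es difeo local}. Multiplication and inversion in $\ukd$ are restrictions of the analytic operations in $\uh$ and remain analytic in these charts, so $\ukd$ becomes a Banach--Lie group with Lie algebra $\kah+\dah$.

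The main obstacle is precisely the verification of (iii), and in particular showing that the identity neighborhood modeled on $\kah+\dah$ is genuinely a chart for the \emph{induced} topology; the nontrivial fact that $\exp^{-1}(\ukd)\cap \mathcal{V}$ coincides with $\mathcal{V}\cap(\kah+\dah)$ is exactly what Proposition \ref{prop: exp es difeo local} provides, and without that result one could a priori have extra elements of $\ukd$ near $1$ not coming from $\kah+\dah$. Once this is in hand, everything else reduces to transporting the ambient analytic structure of $\uh$ along left translations, and the theorem follows.
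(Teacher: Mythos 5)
Your proposal is correct and follows essentially the same route as the paper: closedness from Theorem \ref{teo: ukd es cerrado}, the local exponential chart from Proposition \ref{prop: exp es difeo local}, and the identification $L(\ukd)=\kah+\dah$ via the one-parameter subgroup characterization of Proposition \ref{prop: el algebra de Lie de ukd es kah+dah}. The only difference is presentational: where you build the left-translated atlas explicitly (and note the closedness of $\kah+\dah$ needed for the model space), the paper delegates exactly these points to the definition in Remark \ref{def subgrupo de Lie} and to Corollary V.2.2 of \cite{neeb}.
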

\begin{proof}
 According to the definition of Lie subgroup mentioned in the previous Remark \ref{def subgrupo de Lie}, Theorem \ref{teo: ukd es cerrado} and Proposition \ref{prop: exp es difeo local} imply that $\ukd$ is a Lie subgroup of $\uu(\h)$. 

The equality $L(\ukd)=\kah+\dah$ follows from Corollary V.2.2 in \cite{neeb} and Proposition \ref{prop: el algebra de Lie de ukd es kah+dah}.
\end{proof}

Although there exist stronger notions of Lie subgroups those cannot be used for $\ukd$ since its Lie algebra $\kah + \dah$ is not complemented in $\bah$ (the Lie algebra of $\uh$).

\begin{rem}\label{rem: generalizacion del subgrupo ukd} \textbf{Generalization of the $\ukd$ group.}
	
The  proofs of some of the basic properties we use in the study of $\ukd$ require that the exponential $\exp: \kah\to \uc$ must be surjective. This is the reason why the following generalization involves ideals $\J$ with this property.
	
If $\J\subset \bb(\h)$ is either of the two-sided closed ideals of $p-$Schatten operators (for $p\in[0,\infty)$) or $\kk(\h)$, and $\aaa$ is any  C$^*$ subalgebra of $\bb(\h)$, then the following unitary sets of $\uu(\h)$ can be defined, by analogy with \eqref{def: subconjuntos de operadores unitarios}:

\begin{equation}
\begin{split}
\uu_{\mathcal{J}}&=\{u\in\uu(\h): u-1\in\J\},\\
\uu_{\mathcal{J},\mathcal{A}} &=\{u\in \uu(\h):\exists\ A\in \aaa^{ah}\ {\text{ such\ that }}\ u-e^A\in \J\}, \\
\uu_{\mathcal{A}} &=\{u\in \uu(\h):\exists\ A\in \aaa^{ah}\ {\text{ such\ that }}\ u=e^A\},\\
\uu_{\mathcal{J}+\mathcal{A}}&=\{u\in \uu(\h):\exists\ J\in \J^{ah}\ {\text{ and }}\ A\in \aaa^{ah}\ {\text{ such\ that }}\ u=e^{J+A}\}.\nonumber
\end{split}
\end{equation}
The groups $\uu_{\mathcal{J}}$, where $\J$ is any $p-$Schatten ideal of $\bb(\h)$, were studied in \cite{andruchow_larotonda_recht_Finsler_geometry_p_Schatten}. 

It can be proved that the previous sets of unitary operators satisfy the following properties:
\be 
\item $\uu_{\mathcal{J}}=\{e^J\in\uu(\h): J\in\J^{ah},\left\| J\right\|\leq\pi \}$.
\item $\uu_{\mathcal{J},\mathcal{A}}$ is a group, equals 
\begin{equation*}
\begin{split}
\uu_{\mathcal{J},\mathcal{A}}=\left\{u\in \uu(\h):\right. &\exists\ J\in \J^{ah}, \left\|J\right\|\leq \pi,\ {\rm and}\ 
\\ & \left. A\in \aaa^{ah}\ {\rm such\ that}\ u=e^{J}e^A\right\}.
\end{split}
\end{equation*} 
and  $\uu_{\mathcal{J},\mathcal{A}}=\uu_{\mathcal{J}}\ \uu_{\mathcal{A}}=\uu_{\mathcal{A}}\ \uu_{\mathcal{J}}.$
\item $\uu_{\mathcal{A}}\subsetneq \uu_{\mathcal{J},\mathcal{A}}$.
\item $\uu_{\mathcal{J}}\subsetneq \uu_{\mathcal{J},\mathcal{A}}$.
\item $\uu_{\mathcal{J}+\mathcal{A}}\subseteq \uu_{\mathcal{J},\mathcal{A}}$.
\item If $u\in \uu_{\mathcal{J},\mathcal{A}}$ then $u=J'+A'$, with $J'\in \J$ and $A'\in \uu_{\mathcal{A}}$.
\item For every $J\in\J^{ah}$ and $A\in \aaa^{ah}$ there
exists $J'\in {\mathcal{J}^{ah}}$ such that $e^Je^A=e^Ae^{J'}$.
\item $\uu_{\mathcal{J}}\subsetneq \uu_{\mathcal{J}+\mathcal{A}}$ 
\item If $\aaa^{ah}\cap \J\neq \emptyset$, then 
$$\uu_{\mathcal{J}}=\{u\in \uu(\h):\exists\ A\in \aaa^{ah}\cap \J\ {\rm such\ that}\ u-e^A\in \J\}.$$
\item For every $J\in \J^{ah}$ and $A\in \aaa^{ah}$ sufficiently close to $0$, there exist $J''\in \J^{ah}$ and $A'\in \aaa^{ah}$ such that
$$e^{J}e^A=e^{J''+A'}.$$
\item  For every $J_1,J_2\in\J^{ah}$ and $A_1,A_2\in \aaa^{ah}$ the following statements are equivalent:
\bit
\item $e^{J_1} e^{A_1}=e^{J_2} e^{A_2}$.
\item There exists $a\in \aaa^{ah}\cap \J$ such that $e^{J_2}=e^{J_1}e^{-a}$ and $e^{A_2}=e^a e^{A_1}=e^{a+A_1}$.
\eit
\ee
Property $(1)$ has been proved for $p-$Schatten ideals in \cite{andruchow_larotonda_recht_Finsler_geometry_p_Schatten} (Remark 3.1) and for $\kk(\h)$ see Proposition \ref{prop: uc es ek}.
Properties $(2)$-$(9)$ and $(11)$ may be proved in much the same way as Propositions \ref{prop: Ukd es un grupo y es Uk.Ud}, \ref{propiedades} and \ref{prop: diagonales compactas d para e^-d e^d entre e^K y e^D}. Property $(10)$ involves the \bch\ series expansion $\log\left(e^{J}e^A\right)$ for the Lie algebra $\bah$ (see the Preliminaries).
\end{rem}

\section{Minimal length curves in the orbit of a compact self-adjoint operator} 
 
Consider the unitary Fredholm orbit of a compact operator $$
b=\diag\left(\{\lambda_i\}_{i\in \N}\right)\in \D(\kh^h)
$$ 
with  $\lambda_i\neq\lambda_j$ for each $i\neq j$, and the orbit 
$$
\mathcal{O}_b=\{ubu^*:u\in\mathcal{U}_k)\}.
$$
The isotropy subgroup of $c=e^{K_0}be^{-K_0}\in\ob$, with $K_0\in\kah$ for the action $L_u\cdot c=u c u^*$, with $u\in\uc$, is $\I_c=\{e^{K_0}e^d e^{-K_0}: d\in \D(\kah)\}=\{e^{e^{K_0}d e^{-K_0}}: d\in \D(\kah)\}$. $(T\ob)_c$ can be identified with the quotient space $\kah/\D(\kah)$ for every $c\in\ob$. The projection to the quotient $\kah/\D(\kah)$ defines a Finsler metric as
$$\left\|x\right\|_{e^{K_0}b
e^{-K_0}}=\left\|\left[Y\right]\right\|=\inf_{D\in \D(\kah)}\ \left\|Y+e^{K_0}D 
e^{-K_0}\right\|$$
for each class $[Y]=\left\{Y+e^{K_0}D e^{-K_0}:D\in \D(\kah)\right\}$ \ and 
$x=Yc-cY\in(T\ob)_c$, for $c=e^{K_0}be^{-K_0}\in\ob$.  
This metric is invariant under the action of $L_{e^K}$ for $e^k\in\uc$ (see \cite{bottazzi_varela_DGA}). This invariance implies that the curve $\gamma:[a,b]\to\ob$, such that $\gamma(0)=b$ and $\dot{\gamma}(0)=x$ has the same length than $\beta(t)=L_{e^K}\cdot \gamma:[a,b]\to\ob$ and satisfies $\beta(0)=L_{e^K}\cdot b$,  $\dot\beta(0)=L_{e^K}\cdot \dot{\gamma}(0)=L_{e^K}\cdot x$, for $e^K\in\uc$. 

\begin{prop} \label{prop: equiv}
Let $b=\diag\left(\{\lambda_i\}_{i\in \N}\right)\in \D(\kh)$ with $\lambda_i\neq\lambda_j$ for each $i\neq j$ and $Z_0=S_0+D_0$, with $S_0\in \kah$ and $D_0\in \D(\bah)$. Then $e^{Z_0}be^{-Z_0}\in \ob$.
\end{prop}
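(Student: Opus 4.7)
The plan is to reduce the statement to the decomposition $\mathcal{U}_{k,d}=\uc\,\udiag$ already established in Proposition \ref{prop: Ukd es un grupo y es Uk.Ud}, and then use the fact that any diagonal unitary commutes with the diagonal operator $b$.

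First, I would observe that since $S_0\in\kah$ and $D_0\in\dah$, property (3) of Proposition \ref{propiedades} gives that
$$
e^{Z_0}=e^{S_0+D_0}\in \uda\subset \ukd.
$$
By Proposition \ref{prop: Ukd es un grupo y es Uk.Ud} this means there exist $K\in\kah$ with $\|K\|\leq \pi$ and $D\in\dah$ such that
$$
e^{Z_0}=e^{K}\,e^{D}.
$$

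Next, since both $e^D$ and $b$ are diagonal with respect to the fixed orthonormal basis, they commute, so $e^{D}\,b\,e^{-D}=b$. Consequently,
$$
e^{Z_0}\,b\,e^{-Z_0}=e^{K}\,e^{D}\,b\,e^{-D}\,e^{-K}=e^{K}\,b\,e^{-K}.
$$
By Proposition \ref{prop: uc es ek}, $e^{K}\in\uc$, and therefore $e^{Z_0}\,b\,e^{-Z_0}=e^{K}\,b\,e^{-K}\in \ob$, as required.

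There is no real obstacle here: the statement is essentially a corollary of the factorization $\ukd=\uc\,\udiag$ combined with the invariance of $b$ under diagonal conjugation. The only point requiring any care is to invoke the correct factor ordering (the $\uc\,\udiag$ one rather than $\udiag\,\uc$) so that the diagonal factor sits adjacent to $b$ and is absorbed by the conjugation.
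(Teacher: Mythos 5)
Your proof is correct and follows essentially the same route as the paper: both reduce to factoring $e^{S_0+D_0}$ as a Fredholm unitary times a diagonal unitary and then absorbing the diagonal factor by its commutation with $b$. The only difference is that the paper carries out this factorization directly (splitting the exponential series as in Remark \ref{obs2} to get $e^{Z_0}e^{-D_0}-1\in\kh$, hence $e^{Z_0}=e^{K}e^{D_0}$), whereas you invoke the already-established facts $\uda\subseteq\ukd$ and $\ukd=\uc\,\udiag$, which rest on the same computation.
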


\begin{proof}
Observe that $D_0$ is not necessarily compact. 
Using Remark \ref{obs2} the exponential $e^{Z_0}=e^{S_0+D_0}$ can be rewritten as
$$e^{Z_0}=e^{D_0}+S_0 \Psi(S_0,D_0).$$
Then, $\left(e^{D_0}+S_0 \Psi(S_0,D_0)\right)e^{-D_0}$ is unitary and
$S_0 \Psi(S_0,D_0)e^{-D_0}=e^{D_0-D_0}-1+S_0 \Psi(S_0,D_0)e^{-D_0}\in \kk(\h)$ since $S_0\in\kk(\h)$. Moreover
$$
\left(e^{D_0}+S_0 \Psi(S_0,D_0)\right)e^{-D_0}-1\ \in \kk(\h),
$$ 
which implies that $e^{S_0+D_0}e^{-D_0}-1\in \kk(\h)$. Therefore, by Proposition 3 in \cite{bottazzi_varela_DGA} there exists $K\in \kah$ such that
$$e^{S_0+D_0}e^{-D_0}=e^K\text{ , and therefore } e^{S_0+D_0}=e^Ke^{D_0}.$$
Then
$$e^{Z_0}be^{-Z_0}=e^{S_0+D_0}be^{-S_0-D_0}=e^Ke^{D_0}be^{-D_0}e^{-K}=e^Kbe^{-K}\in \ob.$$
\end{proof}
\begin{teo}\label{teo: exp(z0)b exp(z0) es corta en Ob}
Let $Z_0=S_0+D_0$ such that $S_0\in \kah$, $D_0\in \D(\bah)$ and $b=\diag \left(\{\lambda_i\}_{i\in \N}\right)\in \D(\kh)$ with $\lambda_i\neq\lambda_j$ for each $i\neq j$, and $\gamma(t)=e^{tZ_0}be^{-tZ_0},\ \forall\ t\in \R$.
 Then,
\subitem a) 
$\gamma(t)\in \ob,\ \forall\ t\in \R$, and

\subitem b) 
if $Z_0$ is minimal for $\D(\bah)$ (see \eqref{def: operador minimal autoajunto} and the Preliminaries) then $\gamma: \left[-\frac{\pi}{2\left\|Z_0\right\|},\frac{\pi}{2\left\|Z_0\right\|}\right]\to\ob$ is a minimal length curve on $\ob$ considering the distance \eqref{def: distancia rectificable}.
\end{teo}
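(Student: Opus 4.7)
For part (a), I apply Proposition \ref{prop: equiv} with the decomposition $tZ_0=(tS_0)+(tD_0)$, where $tS_0\in\kah$ and $tD_0\in\D(\bah)$, to conclude that $\gamma(t)=e^{tZ_0}be^{-tZ_0}\in\ob$ for every $t\in\R$. For part (b), my plan has two stages: compute the length of $\gamma|_{[-T,T]}$ exactly (with $T=\pi/(2\|Z_0\|)$), and then bound the rectifiable distance from below by comparison with the full unitary orbit $\mathcal{O}_b^{\uh}=\{ubu^*:u\in\uh\}$.

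\emph{Length.} The velocity is $\dot\gamma(t)=e^{tZ_0}[Z_0,b]e^{-tZ_0}=e^{tZ_0}[S_0,b]e^{-tZ_0}$ since $D_0$ commutes with $b$. By Proposition \ref{prop: equiv} there exists $K_t\in\kah$ with $e^{tZ_0}=e^{K_t}e^{tD_0}$, and since $b$ commutes with $e^{tD_0}$ we get $\gamma(t)=e^{K_t}be^{-K_t}$. Using the $\uc$-invariance of the Finsler metric to translate back to the base point $b$, together with the fact that conjugation by the diagonal unitary $e^{tD_0}$ acts isometrically and preserves $\D(\kah)$, I expect to obtain
$$
\|\dot\gamma(t)\|_{\gamma(t)}=\inf_{D\in\D(\kah)}\|S_0+D\|,
$$
independent of $t$. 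By Remark \ref{rem: ukd y uk misma metrica de Finsler en Ob} this equals $\inf_{D\in\dah}\|S_0+D\|$, and by the hypothesis that $Z_0=S_0+D_0$ is minimal for $\D(\bah)$, this common value is $\|Z_0\|$. Hence $\longi(\gamma|_{[-T,T]})=2T\|Z_0\|$.

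\emph{Minimality.} Equip $\mathcal{O}_b^{\uh}$ with the Finsler metric induced by the quotient $\bah/\dah$. The minimality hypothesis on $Z_0$ says precisely that $Z_0$ is a minimal lifting of $[Z_0,b]$ in this quotient. The theorem of Dur\'an, Mata-Lozano and Recht \cite{dmr1} (see also \cite{andruchow_larotonda_The_rectifiable_distance}) then gives that $\gamma|_{[-T,T]}$ is of minimal length in $\mathcal{O}_b^{\uh}$, of length $2T\|Z_0\|$. Since $\ob\subseteq\mathcal{O}_b^{\uh}$, and the two Finsler metrics agree on $T(\ob)$ (both are $\uc$-invariant and coincide at $b$ by Remark \ref{rem: ukd y uk misma metrica de Finsler en Ob}), every curve in $\ob$ has the same length in either metric. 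Thus
$$
\dist_{\ob}(b,\gamma(T))\geq\dist_{\mathcal{O}_b^{\uh}}(b,\gamma(T))=T\|Z_0\|=\longi(\gamma|_{[0,T]}),
$$
proving minimality on $[0,T]$; a symmetric argument (or replacing $Z_0$ by $-Z_0$) handles $[-T,0]$.

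The main obstacle is the Finsler-metric coincidence of Remark \ref{rem: ukd y uk misma metrica de Finsler en Ob}, i.e. that $\inf_{D\in\D(\kah)}\|Y+D\|=\inf_{D\in\dah}\|Y+D\|$ for $Y\in\kah$: a priori the infimum over bounded diagonals could be strictly smaller, so equality requires exploiting the compactness of $Y$ to approximate bounded-diagonal minimizers by compact ones. A secondary concern is invoking \cite{dmr1} in the correct generality when $b$ has infinite spectrum and the lifting $Z_0$ is possibly non-compact; if not available off-the-shelf, the finite-dimensional truncation approach from \cite{bottazzi_varela_DGA} provides a fallback.
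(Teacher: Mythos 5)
Your argument is essentially the paper's own proof: part (a) is exactly the application of Proposition \ref{prop: equiv} to $tZ_0$, and part (b) coincides with the paper's argument, which compares $\ob$ with the full unitary orbit $\PP_b=\{ubu^*:u\in\uu(\h)\}$ and invokes Theorem II of \cite{dmr1} for the minimal lifting $Z_0$. The two issues you flag are already settled within the paper's framework: the equality $\inf_{D\in\D(\kah)}\|Y+D\|=\inf_{D\in\dah}\|Y+D\|$ for compact $Y$ is Proposition 5 of \cite{bottazzi_varela_LAA} (quoted in Remark \ref{rem: ukd y uk misma metrica de Finsler en Ob}), and \cite{dmr1} applies to the orbit of an arbitrary self-adjoint element of $\bb(\h)$ under the full unitary group, so no compactness of $Z_0$ and no truncation argument are needed.
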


\begin{proof}
The assertion of item a) follows directly from Proposition \ref{prop: equiv}. Note that the a) holds even though $Z_0$ may not be compact.

In order to prove b) consider $\PP_b=\{ubu^*:\ u\in\uu(\h)\}$, then by Theorem II in \cite{dmr1}, since $Z_0$ is minimal, the curve $\gamma$ has minimal length over all the smooth curves in $\PP_b$ that join $\gamma(0)=b$ and $\gamma(t)$, with $\left|t\right|\leq \frac{\pi}{2\left\|Z_0\right\|}$. Since clearly $\ob\subseteq\PP_b$, then for each $t_0\in \left[-\frac{\pi}{2\left\|Z_0\right\|},\frac{\pi}{2\left\|Z_0\right\|}\right]$ follows that $\gamma$ is minimal in $\ob$, that is
$$\longi(\gamma)=\dist(b,\gamma(t_0)),$$
where $\dist(b,\gamma(t_0))$ is the rectifiable distance between $b$ and $\gamma(t_0)$ defined in \eqref{def: distancia rectificable} of the Preliminaries.
\end{proof}
\begin{rem}\label{rem: siempre hay minimal acotado}
Recall that for every $S_0$ there always exists a minimal $Z_0\in\bah$ as that mentioned in Theorem \ref{teo: exp(z0)b exp(z0) es corta en Ob} although it may not be compact (see \cite{dmr1},  \cite{bottazzi_varela_LAA}).
\end{rem}
\begin{prop} \label{orbitas iguales}
If $b=\diag \left(\{\lambda_i\}_{i\in \N}\right)\in \D(\kh)$ with $\lambda_i\neq\lambda_j$ for each $i\neq j$, then 
$\ob=\oo^{\ukd}_b$ and $\oo^{\uda}_b\subseteq \oo_b$. 
\end{prop}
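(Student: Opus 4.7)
The plan is to establish the two inclusions $\ob\subseteq \oo^{\ukd}_b\subseteq \ob$ and then deduce $\oo^{\uda}_b\subseteq \ob$ by a containment already proved.

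First, since $\uc\subsetneq\ukd$ (Proposition \ref{propiedades}, item (2)), it is immediate that $\ob = \oo^{\uc}_b\subseteq \oo^{\ukd}_b$.

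For the reverse inclusion, take any $u\in\ukd$. By Proposition \ref{prop: Ukd es un grupo y es Uk.Ud} we may factor $u=e^{K}e^{D}$ with $K\in\kah$ and $D\in\dah$. The crucial observation is that $b=\diag(\{\lambda_i\}_{i\in\N})$ is diagonal in the fixed orthonormal basis, and so is $e^{D}$; hence they commute, which gives $e^{D}b\,e^{-D}=b$. Consequently
\begin{equation*}
ubu^{*}=e^{K}e^{D}b\,e^{-D}e^{-K}=e^{K}b\,e^{-K}\in\ob,
\end{equation*}
proving $\oo^{\ukd}_b\subseteq \ob$ and therefore the equality $\ob=\oo^{\ukd}_b$.

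Finally, for $\oo^{\uda}_b\subseteq \ob$, we use Proposition \ref{propiedades}, item (3), which states $\uda\subseteq \ukd$. Hence
\begin{equation*}
\oo^{\uda}_b\subseteq \oo^{\ukd}_b=\ob.
\end{equation*}

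No real obstacle is expected here; the argument reduces, once the factorization $\ukd=\uc\,\udiag$ is in hand, to the trivial fact that any diagonal unitary commutes with the diagonal operator $b$. All the work has already been done in Proposition \ref{prop: Ukd es un grupo y es Uk.Ud} and Proposition \ref{propiedades}.
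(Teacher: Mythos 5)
Your proposal is correct and follows essentially the same route as the paper: the factorization $\ukd=\uc\,\udiag$ together with the fact that a diagonal unitary commutes with the diagonal operator $b$ gives $e^{K}e^{D}b\,e^{-D}e^{-K}=e^{K}b\,e^{-K}$, and the inclusion $\oo^{\uda}_b\subseteq\ob$ follows from $\uda\subseteq\ukd$ (item (3) of Proposition \ref{propiedades}). Your write-up merely spells out the two inclusions more explicitly than the paper does.
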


\begin{proof}
 Since $b$ is diagonal and $e^Ke^Dbe^{-D}e^{-K}=e^Kbe^{-K}$ follows that $\ob=\oo^{\ukd}_b$. The inclusion $\oo^{\uda}_b\subseteq \oo_b$ is trivial because $b$ is diagonal and $\ukd\supset \uda$ (see (3) in Proposition \ref{propiedades}).
\end{proof}

\begin{rem}\label{rem: ukd y uk misma metrica de Finsler en Ob}
Under the same assumptions of Proposition \ref{orbitas iguales}, if $c\in\ob$ the following identifications can be made
$$
(T\ob)_c\cong (T\uc)_1/(T\I_b)_1 = \kah/\D(\kah)\\ 
$$
and
\begin{equation*}
\begin{split}
	(T\ob)^{\ukd}_c&\cong (T\ukd)_1/(T\I_b)_1\\
	&\cong \left( \kah+\dah\right)/\dah.
\end{split}
\end{equation*} 
Moreover the norm on each quotient coincides on every class since for $K\in\kah$ holds  $\|[K]\|=\inf_{d\in\D(\kah)}\|K+d\|=\inf_{D\in\dah}\|K+D\|$ (see for example Proposition 5 in \cite{bottazzi_varela_LAA}). Therefore the Finsler metrics defined by the subgroups $\uc$ and $\ukd$ coincide on $\ob$.
\end{rem}

Let $c=L_{e^{K_0}}\cdot b=e^{K_0}be^{-K_0}\in\ob$ (for $K_0\in\kah$) and $x\in T(\ob)_c$. Then  there always exists a vector $z_c=L_{e^{K_0}}\cdot	Z_0$, with $ z_c c-z_c c=x$ minimal for $\{F\in \bah: F c-c F=0\}=\{F\in \bah: F=e^{K_0} D e^{-K_0} , \text{ for } D\in\dah \}$ such that $Z_0$ is minimal for $\dah$ as in Theorem \ref{teo: exp(z0)b exp(z0) es corta en Ob} b) and Remark \ref{rem: siempre hay minimal acotado}. 
That is,
\begin{equation*}
\begin{split}
\|x\|_c&=\|[z_c]\|_c=\inf_{F\in \bah\cap\{c\}'}\|z_c+F\|
\\
&=\inf_{D\in \dah}\|e^{K_0}Z_0e^{-K_0}+e^{K_0}De^{-K_0}\|\\
&=\inf_{D\in \dah}\|Z_0+D\|
=\|[Z_0]\|
\end{split}
\end{equation*}
This equality and the left invariance of the action $L_{e^K}$ imply that the curve
$$
\beta(t)= e^{t z_c} c e^{-t z_c}
$$
for $t\in[-\frac{\pi}{2\|z_c\|}, \frac{\pi}{2\|z_c\|}]$, 
satisfies $\beta(0)=c$, $\dot\beta(0)=x=z_c c-c z_c$ and  $\longi(\beta|_{[a,b]})=\longi(\gamma|_{[a,b]})$ for the curve $\gamma$ mentioned in Theorem \ref{teo: exp(z0)b exp(z0) es corta en Ob} and every $[a,b]\subset [-\frac{\pi}{2\|z_c\|}, \frac{\pi}{2\|z_c\|}]$.
The previous comments and results allow us to prove the following.

\begin{cor}\label{cor: curvas minimales con comienzo en c}
	Let $b=\diag \left(\{\lambda_i\}_{i\in \N}\right)\in \D(\kh^{h})$, 
	$\lambda_i\neq \lambda_j$ for each $i\neq j$, 
	$c=e^{K_0}be^{-K_0}\in \ob$, with $K_0\in\kah$, and $x\in T(\ob)_c$. Then 
	there exists $Z_0\in\bah$ minimal for $\dah$ such that 
$\beta(t)=e^{t z_c} c e^{-tz_c}\in \ob$
 for all $ t\in \R
$, $z_c=e^{K_0} Z_0 e^{-K_0}$ and $x=L_{e^{K_0}}\cdot (Z_0 b-bZ_0)$. Moreover, $\beta:[-\frac{\pi}{2\|z_c\|}, \frac{\pi}{2\|z_c\|}]\to \ob$ is a minimal length curve in $\ob$ such that $\beta(0)=c$, $\dot\beta(0)=x$  considering the distance \eqref{def: distancia rectificable}.
\end{cor}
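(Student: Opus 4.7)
The plan is to pull the tangent vector $x$ back to the base point $b$, invoke Theorem \ref{teo: exp(z0)b exp(z0) es corta en Ob} there, and then push the resulting minimal curve forward to $c$ using the left action $L_{e^{K_0}}$, which is an isometry for the Finsler metric on $\ob$.

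First, I would use the fact that $L_{e^{K_0}}\colon\ob\to\ob$ is a diffeomorphism sending $b$ to $c$, so its differential at $b$ is a linear isomorphism $T(\ob)_b\to T(\ob)_c$. Hence there exists $Y\in\kah$ with $L_{e^{K_0}}\cdot (Yb-bY)=x$; equivalently, if we denote $y=Yb-bY\in T(\ob)_b$, then $x=e^{K_0}ye^{-K_0}$.

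Next, by Remark \ref{rem: siempre hay minimal acotado}, among all $Z\in\bah$ satisfying $[Z,b]=y$ (which is the class of $Y$ in $\bah/\dah$, since $[D,b]=0$ for every diagonal $D$) there exists a minimal lifting $Z_0\in\bah$, i.e.\ $\|Z_0\|=\inf_{D\in\dah}\|Y+D\|=\|[Z_0]\|$. Note $Z_0$ is in general not compact; nevertheless $Z_0=S_0+D_0$ with $S_0\in\kah$ and $D_0\in\dah$ (take $S_0=Y$ and $D_0=Z_0-Y$), so Proposition \ref{prop: equiv} and Theorem \ref{teo: exp(z0)b exp(z0) es corta en Ob} apply. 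This gives that $\gamma(t)=e^{tZ_0}be^{-tZ_0}\in\ob$ for all $t\in\R$ and that $\gamma$ is a minimal length curve on the interval $\bigl[-\tfrac{\pi}{2\|Z_0\|},\tfrac{\pi}{2\|Z_0\|}\bigr]$ with respect to the distance \eqref{def: distancia rectificable}.

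Now set $z_c=e^{K_0}Z_0e^{-K_0}$ and define $\beta(t)=L_{e^{K_0}}\cdot\gamma(t)=e^{K_0}\gamma(t)e^{-K_0}$. A direct computation shows
\begin{equation*}
\beta(t)=e^{K_0}e^{tZ_0}e^{-K_0}\,c\,e^{K_0}e^{-tZ_0}e^{-K_0}=e^{tz_c}\,c\,e^{-tz_c},
\end{equation*}
and $\beta(0)=c$, $\dot\beta(0)=e^{K_0}(Z_0b-bZ_0)e^{-K_0}=z_c c-c z_c=x$. Since $\|z_c\|=\|Z_0\|$ (conjugation by a unitary preserves the operator norm), the parameter interval is $\bigl[-\tfrac{\pi}{2\|z_c\|},\tfrac{\pi}{2\|z_c\|}\bigr]$.

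Finally, to see that $\beta$ is itself a minimal length curve on this interval, I would invoke the left-invariance of the Finsler metric under the action of $\uc$ noted in the discussion preceding this corollary: for each $t$, $\|\dot\beta(t)\|_{\beta(t)}=\|\dot\gamma(t)\|_{\gamma(t)}$, hence $\longi(\beta|_{[a,b]})=\longi(\gamma|_{[a,b]})$. Since $\gamma$ is minimal in $\ob$ on the said interval, so is $\beta$. The main obstacle is essentially conceptual rather than technical: one must recognize that the relevant minimality at $c$ (infimum over $\bah\cap\{c\}'$, i.e.\ over operators of the form $e^{K_0}De^{-K_0}$ with $D\in\dah$) reduces, after conjugating, to minimality over $\dah$ at $b$, which is precisely what \eqref{def: operador minimal autoajunto} and Theorem \ref{teo: exp(z0)b exp(z0) es corta en Ob} require; once that identification is made, assembling the pieces is routine.
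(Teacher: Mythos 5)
Your proposal is correct and follows essentially the same route as the paper: pull $x$ back to $b$, take a minimal (not necessarily compact) lifting $Z_0$ via Remark \ref{rem: siempre hay minimal acotado}, apply Theorem \ref{teo: exp(z0)b exp(z0) es corta en Ob} (with Proposition \ref{prop: equiv} for $\gamma(t)\in\ob$), and transport the short curve by $L_{e^{K_0}}$ using the invariance of the Finsler metric. Your explicit decomposition $Z_0=Y+(Z_0-Y)$ with $Z_0-Y\in\dah$ (valid since $b$ has distinct eigenvalues) is a slightly more careful justification of a step the paper leaves implicit, but it is not a different argument.
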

\begin{proof}
	Given $x\in T(\ob)_c$ we can choose $Z_0\in\kah$, such that $Z_0 b-b Z_0=L_{e^{-K_0}}\cdot x\in T(\ob)_b$ and that satisfies $\|Z_0\|=\inf_{D\in\dah}\|Z_0+D\|$ as in Theorem  \ref{teo: exp(z0)b exp(z0) es corta en Ob} and Remark \ref{rem: siempre hay minimal acotado}. $Z_0$ is minimal for $\dah$ and therefore Theorem \ref{teo: exp(z0)b exp(z0) es corta en Ob} b) applies and $\gamma(t)=e^{tZ_0}be^{-tZ_0}$ is a short curve for $t\in[-\frac{\pi}{2\|Z_0\|}, \frac{\pi}{2\|Z_0\|}]$. 
	Direct calculations show that $x=L_{e^{K_0}}\cdot (Z_0 b-b Z_0)=(L_{e^{K_0}}\cdot Z_0)( L_{e^{K_0}}\cdot b)-(L_{e^{K_0}}\cdot b) ( L_{e^{K_0}}\cdot Z_0)$.
If $z_c=L_{e^{K_0}}\cdot Z_0=e^{K_0}Z_0e^{-K_0}$ it is apparent that if $\beta(t)=e^{tz_c}ce^{-tz_c}$, for $t\in\R$ and $c=L_{e^K_0}\cdot b$, then $\beta(0)=c$ and $\dot{\beta}(0)=z_c c-c z_c=L_{e^{K_0}}\cdot (Z_0 b-b Z_0)=x$

Similar considerations as those in Proposition \ref{prop: equiv} using that $$\beta(t)=e^{K_0}e^{tZ_0}e^{-K_0}e^{K_0}be^{K_0}e^{-K_0}e^{-tZ_0}e^{-K_0}=L_{e^{K_0}} \left(e^{tZ_0}be^{-tZ_0}\right)=L_{e^{K_0}} \left(\gamma(t)\right)
	$$ 
	imply that $\beta(t)\in\ob$ for all $t\in\R$. 
	
	Standard arguments of homogeneous spaces (invariance of the Finsler metric) imply that $\beta$ is a curve of minimal  length when is defined in the interval $[-\frac{\pi}{2\|z_c\|}, \frac{\pi}{2\|z_c\|}]  =[-\frac{\pi}{2\|Z_0\|}, \frac{\pi}{2\|Z_0\|}]$ as $\gamma$ is.
\end{proof}

\begin{rem}
	Theorem \ref{teo: exp(z0)b exp(z0) es corta en Ob}, Remark \ref{rem: ukd y uk misma metrica de Finsler en Ob} and Corollary 	\ref{cor: curvas minimales con comienzo en c}  allow us to describe short curves $\beta$ in $\ob$ with initial condition $\beta(0)=c$ even for velocity vectors $x\in T(\ob)_c$ that do not have a minimal compact lifting $Z_0$. Thus $\uc$ is an example of a group whose action on $\ob$ has short curves that need not to be described necessarily with minimal vectors $F$ that belong to $\{F\in \bah: F c-c F=0\}$. Nevertheless there exists another group $\ukd$ acting on $\ob$ such that its $b$-orbit coincides with that of $\uc$, defines the same Finsler metric on it and where every short curve can be described by means of a minimal lifting.
\end{rem}
The previous geometric properties allow the following results relating the quotient norm $\displaystyle\|[K]\|=\inf_{D\in\dah}\|K+D\|$ of two anti-Hermitian compact operators.
\begin{prop}
Let $K_1,K_2\in\kah$ and $D_1,D_2\in \D(\bah)$ such that $e^{tK_1}e^{D_1}=e^{tK_2}e^{D_2}$ for all $t\in[0,1]$. Then,
$$\left\|[K_1]\right\|=\left\|[K_2]\right\|.$$ 
\end{prop}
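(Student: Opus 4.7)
The plan is to exploit the full parametrized equality in the hypothesis, which actually forces $K_1=K_2$ and so yields the conclusion for free.

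First I would evaluate at $t=0$, giving $e^{D_1}=e^{D_2}$. Right-multiplying the identity $e^{tK_1}e^{D_1}=e^{tK_2}e^{D_2}$ by $e^{-D_1}=e^{-D_2}$ then produces $e^{tK_1}=e^{tK_2}$ for every $t\in[0,1]$. Since $t\mapsto e^{tK_j}$ is real-analytic in operator norm with value $I$ and derivative $K_j$ at $t=0$, differentiating at $t=0$ yields $K_1=K_2$. In particular $[K_1]=[K_2]$ in $\kah/\D(\kah)$, so $\|[K_1]\|=\|[K_2]\|$ is immediate.

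There is no real obstacle here; the only care needed is the differentiation step, which is routine since $t\mapsto e^{tK_j}$ is smooth and two such curves agreeing on a neighbourhood of $0$ must have equal first-order terms. Equivalently, one could compare the degree-one coefficients of the power-series expansions of $e^{tK_1}$ and $e^{tK_2}$ and reach the same conclusion $K_1=K_2$.

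If instead one reads the hypothesis only up to isotropy, i.e.\ that the two curves produce the same path in $\ob$ under conjugation (using that $D_1,D_2\in\dah$ commute with the diagonal $b$ of this section), then the argument reduces to $e^{tK_1}be^{-tK_1}=e^{tK_2}be^{-tK_2}$; differentiating at $t=0$ gives $[K_1-K_2,b]=0$, and the simple spectrum of $b$ forces $K_1-K_2\in\kah\cap\dah=\D(\kah)$, so $\|[K_1]\|=\|[K_2]\|$ follows again. Either reading therefore fits within the quotient-norm framework of Remark \ref{rem: ukd y uk misma metrica de Finsler en Ob}.
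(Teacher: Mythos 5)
Your argument is correct, but it is genuinely different from the paper's, and in fact it shows that the hypothesis as literally stated is stronger than what the paper actually uses. Evaluating at $t=0$ gives $e^{D_1}=e^{D_2}$, hence $e^{tK_1}=e^{tK_2}$ on $[0,1]$, and differentiating yields $K_1=K_2$; so you obtain the stronger conclusion $[K_1]=[K_2]$, from which equality of the quotient norms is immediate. The paper never makes this observation: it extracts from the hypothesis only the weaker consequence that the conjugation curves coincide, $e^{tK_1}be^{-tK_1}=e^{tK_2}be^{-tK_2}$ for a diagonal $b$ with pairwise distinct eigenvalues (using that $D_1,D_2$ commute with $b$), and then compares rectifiable lengths: by invariance of the Finsler metric, $\longi$ of the first curve equals $\left\|[K_1]\right\|$ and of the second equals $\left\|[K_2]\right\|$, and since the two curves are identical the lengths, hence the quotient norms, agree. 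Your alternative reading --- differentiating the curve identity at $t=0$ to get $[K_1-K_2,b]=0$ and using the simple spectrum of $b$ to conclude $K_1-K_2\in\D(\kah)$ --- is also correct and again gives $[K_1]=[K_2]$ under exactly the hypothesis the paper works with, with no length computation. What the paper's route buys is an illustration of the Finsler-length machinery of this section (the same pattern recurs in the next proposition, where only an inequality is available); what your route buys is brevity and a sharper conclusion, and it exposes that the ``for all $t\in[0,1]$'' hypothesis makes the proposition trivially true, suggesting the intended assumption may have been weaker (e.g.\ a single equality at $t=1$), in which case neither your shortcut nor the paper's length comparison as written would apply.
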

\begin{proof}
Let $b=\diag \left(\{\lambda_i\}_{i\in \N}\right)\in \D(\kh^h)$ with $\lambda_i\neq\lambda_j$ for each $i\neq j$. The equality $e^{tK_1}e^{D_1}=e^{tK_2}e^{D_2}$ implies that
$$e^{tK_1}be^{-tK_1}=e^{tK_2}be^{-tK_2},$$
for all $t\in[0,1]$. If we consider $\alpha,\s: [0,1]\to \ob$, defined by
$$\s(t)=e^{tK_1}be^{-tK_1}\ {\rm and}\ \alpha(t)=e^{tK_2}be^{-tK_2},$$
then
$$\longi(\s)=\longi(\alpha)\Rightarrow\ \int_0^1\left\|\s'(t)\right\|_{\s(t)}dt=\longi(\alpha)=\int_0^1\left\|\alpha'(t)\right\|_{\alpha(t)}dt$$
$$\Rightarrow\ \left\|[K_1]\right\|=\left\|[K_2]\right\|.$$
This concludes the proof.
\end{proof}

\begin{prop}
Let $K\in\kah$ and $D\in \D(\bah)$ such that $K+D$ is minimal and $\left\|K+D\right\|<\frac{\pi}{2}$. Then, if $K'\in \kah$ is such that $e^{K+D}=e^{K'}e^D$ the inequality 
$$
\|K+D\|=\left\|[K]\right\|\leq\left\|[K']\right\|
$$
holds.
 
\end{prop}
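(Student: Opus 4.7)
The plan is to fix a diagonal compact self-adjoint $b=\diag(\{\lambda_i\}_{i\in\N})\in\D(\kh)$ with $\lambda_i\neq\lambda_j$ for $i\neq j$, and to compare the rectifiable lengths of two curves in $\ob$ that share their endpoints. The curves are
\[
\gamma_1(t)=e^{t(K+D)}be^{-t(K+D)}, \qquad \gamma_2(t)=e^{tK'}be^{-tK'},
\]
for $t\in[0,1]$. Proposition~\ref{prop: equiv} and $K'\in\kah$ guarantee that both lie in $\ob$. They share the initial point $b$, and since $D$ commutes with $b$,
\[
\gamma_1(1)=e^{K+D}be^{-(K+D)}=e^{K'}e^Dbe^{-D}e^{-K'}=e^{K'}be^{-K'}=\gamma_2(1).
\]

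Since $K+D$ is minimal for $\dah$ and $\|K+D\|<\pi/2$, the inequality $1<\pi/(2\|K+D\|)$ allows Theorem~\ref{teo: exp(z0)b exp(z0) es corta en Ob}(b), applied with $Z_0=K+D$, to conclude that $\gamma_1|_{[0,1]}$ is a minimum length curve in $\ob$. In particular,
\[
\longi(\gamma_1|_{[0,1]})\leq\longi(\gamma_2|_{[0,1]}).
\]

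The two lengths then admit clean descriptions via the invariance of the Finsler metric. For $\gamma_2$, the left invariance under $\uc$ together with $K'\in\kah$ yields the constant speed $\|\gamma_2'(t)\|_{\gamma_2(t)}=\|[K',b]\|_b=\|[K']\|$, so $\longi(\gamma_2|_{[0,1]})=\|[K']\|$. For $\gamma_1$ I would use $[K+D,b]=[K,b]$ together with the decomposition $e^{t(K+D)}=e^{K_t'}e^{tD}$ with $K_t'\in\kah$ (from the proof of Proposition~\ref{prop: equiv}) to express $\gamma_1'(t)$ as an $L_{e^{K_t'}}$-translate of $[e^{tD}Ke^{-tD},b]$. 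The commutativity of diagonal operators then gives
\[
\|\gamma_1'(t)\|_{\gamma_1(t)}=\inf_{D''\in\dah}\|e^{tD}Ke^{-tD}+D''\|=\inf_{D''\in\dah}\|K+D''\|=\|K+D\|=\|[K]\|,
\]
where the penultimate equality uses $e^{tD}D''e^{-tD}=D''$ and the last is the minimality hypothesis. Thus $\longi(\gamma_1|_{[0,1]})=\|[K]\|$, and combining with the preceding inequality yields $\|[K]\|\leq\|[K']\|$.

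I expect the main technical subtlety to lie in handling $\gamma_1$, because $e^{t(K+D)}$ belongs to $\ukd$ and not to $\uc$: the $\uc$-invariance of the Finsler metric has to be accessed through the factorization $\ukd=\uc\,\udiag$, and the commutativity of all diagonal operators is essential both for eliminating the $e^{tD}$ conjugation and for identifying the resulting quotient norm with $\|[K]\|$.
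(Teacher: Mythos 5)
Your proposal is correct and follows essentially the same route as the paper: fix a diagonal compact $b$ with simple spectrum, compare the curves $t\mapsto e^{t(K+D)}be^{-t(K+D)}$ and $t\mapsto e^{tK'}be^{-tK'}$, which share endpoints since $D$ commutes with $b$, and use the minimality of $K+D$ (via Theorem \ref{teo: exp(z0)b exp(z0) es corta en Ob} with $\|K+D\|<\pi/2$) to bound the first length by the second, identifying the lengths with $\|[K]\|$ and $\|[K']\|$. Your explicit computation of $\longi(\gamma_1)$ through the factorization $e^{t(K+D)}=e^{K_t'}e^{tD}$ and the conjugation-invariance under diagonal unitaries simply fills in a step the paper leaves implicit, so there is nothing to correct.
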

\begin{proof}
Let $b=\diag \left(\{\lambda_i\}_{i\in \N}\right)\in \D(\bh)$ with $\lambda_i\neq\lambda_j$ for each $i\neq j$, and consider $\alpha,\s: [0,1]\to \ob$, defined by
$$\s(t)=e^{t(K+D)}be^{-t(K+D)}\ {\rm and}\ \alpha(t)=e^{tK'}be^{-tK'}.$$
Observe that since $e^{K+D}=e^{K'} e^{D}$ then $\beta(0)=\alpha(0)$ and $\beta(1)=\alpha(1)$.

But $\s$ has minimal length between all rectifiable unitary curves that join $b$ with $\s(1)=e^{K+D}be^{-(K+D)}=e^{K'}be^{-K'}=\alpha(1)$ (see Corollary \ref{cor: curvas minimales con comienzo en c} and \cite{dmr1}).

Therefore
$$\longi(\s)\leq \longi(\alpha)$$
$$
\Rightarrow 
\int_0^1\left\|\s'(t)\right\|_{\s(t)}dt=\left\|Kb-bK\right\|_b= 
\|K+D\|= \left\|[K]\right\|$$
$$\leq\int_0^1\left\|\alpha'(t)\right\|_{\alpha(t)}dt=\left\|K'b-bK'\right\|_b=\left\|[K']\right\|.$$
\end{proof}
 

\subsection*{Acknowledgments}
We acknowledge Professors Esteban Andruchow, Gabriel Larotonda and Eduardo Chiumiento for fruitful conversations and suggestions along the preparation of this manuscript.


\begin{thebibliography}{XX}
	
	
	
	
	
	
	\normalsize
	\baselineskip=17pt


\bibitem{andruchow_larotonda_The_rectifiable_distance}
E. Andruchow  and G. Larotonda,  \textit{The rectifiable distance in the unitary Fredholm group}. Studia Math. 196 (2010), no. 2, 151--178.

\bibitem{andruchow_larotonda_recht_Finsler_geometry_p_Schatten} E. Andruchow, G. Larotonda, L. Recht,  \textit{Finsler geometry and actions of the $p$-Schatten unitary groups}. Trans. Amer. Math. Soc. 362 (2010), no. 1, 319--344.

\bibitem{antezana_larotonda_varela_Thompson}
J. Antezana, G. Larotonda and A.  Varela, \textit{Thompson-type formulae}. J. Funct. Anal. 262 (2012), no. 4, 1515--1528.

\bibitem{beltita_Smooth_homogeneous_structures} D. 
Belti\c t\u a,\ \textit{Smooth homogeneous structures in operator theory}. Chapman \& Hall/CRC Monographs and Surveys in Pure and Applied Mathematics, 137 (2006). xvi+302 pp.

\bibitem{bottazzi_varela_LAA} T. 
Bottazzi and A. Varela,  \textit{Best approximation by diagonal compact operators}. Linear Algebra Appl. 439 (2013), no. 10, 3044--3056.

\bibitem{bottazzi_varela_DGA} T. Bottazzi and A. Varela: \textit{Minimal length curves in unitary orbits of a Hermitian compact operator}. Differential Geom. Appl. 45 (2016), 1--22.

\bibitem{chiumiento_normal_logarithms} E. Chiumiento, \textit{On normal operator logarithms}. Linear Algebra Appl. 439 (2013), no. 2, 455--462. 

\bibitem{dmr1} 
C. Dur\'an,  L. Mata-Lorenzo  and L. Recht, \textit{Metric geometry in homogeneous spaces of the unitary group of a $C\sp *$-algebra. I. Minimal curves}. Adv. Math.  184  (2004), no. 2, 342--366.


\bibitem{neeb}
K.-H. Neeb,  Monastir Summer School: Infinite-Dimensional Lie Groups, Lecture Notes (2005).

\bibitem{neeb_pianzola}
T.S. Ratiu, \textit{Coadjoint orbits and the beginnings of a geometric representation theory}. Developments and trends in infinite-dimensional Lie theory, 417--457, Progr. Math., 288, Birkhäuser Boston, Inc., Boston, MA, 2011. 
 
\bibitem{thompson_Proof_of_a_conjectured_exponential_formula}
R. Thompson, \textit{Proof of a conjectured exponential formula}. Linear and Multilinear Algebra 19 (1986), no. 2, 187--197.
%
\bibitem{varadarajan_Lie_groups} 
V.S. Varadarajan, \textit{Lie groups, Lie algebras and their representations}, volume 102. Springer Science \& Business Media, (2013). 

\end{thebibliography}
\end{document}